\theoremstyle{plain}
\newtheorem{theorem}{Theorem}[section]
\newtheorem{lemma}[theorem]{Lemma}
\newtheorem{corollary}[theorem]{Corollary}
\theoremstyle{definition}
\newtheorem{definition}[theorem]{Definition}
\newtheorem{remark}[theorem]{Remark}
\numberwithin{equation}{section}
\numberwithin{equation}{section}
\begin{document}
\title[A new algorithm for mixed equilibrium problem]%
{A new algorithm for mixed equilibrium problem and Bregman strongly nonexpansive mappings in Banach spaces}
\author[Vahid Darvish]%
{Vahid Darvish}

\newcommand{\acr}{\newline\indent}
\address{Department of Mathematics and Computer Science\\
Amirkabir University of Technology\\
Hafez Ave., P.O. Box 15875-4413\\Tehran, Iran.} \email{vahid.darvish@mail.com}

\subjclass[2010]{47H05, 47J25, 58C30}
\keywords{Banach space, Bregman projection, Bregman distance, Bregman strongly nonexpansive mapping, fixed point, mixed equilibrium problem. }

\begin{abstract}
In this paper, we study a new iterative method for a common fixed point of a finite family of Bregman strongly nonexpansive mappings in the frame work of reflexive real Banach spaces. Moreover, we prove the strong convergence theorem for finding common fixed points with the solutions of a mixed equilibrium problem.
\end{abstract}

\maketitle

\section{Introduction}
Let $E$ be a real reflexive Banach space and $C$ a nonempty, closed and convex subset of $E$ and  $E^{*}$ be the dual space of $E$ and $f:E\to (-\infty,+\infty]$ be a proper, lower semi-continuous and convex function. We denote by $\text{dom} f$, the domain of $f$, that is the set $\{x\in E : f(x)<+\infty\}$. Let $x\in \text{int}(\text{dom} f)$, the subdifferential  of $f$ at $x$ is the convex set defined by 
\begin{equation*}
\partial f(x)=\{x^{*}\in E^{*} : f(x)+\langle x^{*},y-x\rangle \leq f(y), \forall y\in E\},
\end{equation*}
where the Fenchel conjugate of $f$ is the function $f^{*}: E^{*}\to (-\infty,+\infty]$ defined by 
$$f^{*}(x^{*})=\sup \{\langle x^{*},x\rangle -f(x): x\in E\}.$$

Equilibrium problems which were introduced by Blum and Oettli \cite{blu} and Noor and Oettli \cite{asl} in 1994 have had a great
impact and influence in the development of several branches of pure and applied sciences. It has been shown that the
equilibrium problem theory provides a novel and unified treatment of a wide class of problems which arise in economics,
finance, image reconstruction, ecology, transportation, network, elasticity and optimization. It has been shown (\cite{blu},\cite{asl}) that
equilibrium problems include variational inequalities, fixed point, Nash equilibrium and game theory as special cases. Hence
collectively, equilibrium problems cover a vast range of applications. Due to the nature of the equilibrium problems, it is
not possible to extend the projection and its variant forms for solving equilibrium problems. To overcome this drawback,
one usually uses the auxiliary principle technique. The main and basic idea in this technique is to consider an auxiliary
equilibrium problem related to the original problem and then show that the solution of the auxiliary problems is a solution
of the original problem. This technique has been used to suggest and analyze a number of iterative methods for solving
various classes of equilibrium problems and variational inequalities, see \cite{asl2}, \cite{cen}  and the references therein.
Related to the equilibrium problems, we also have the problem of finding the fixed points of the nonexpansive mappings,
which is the subject of current interest in functional analysis. It is natural to construct a unified approach for these problems.
In this direction, several authors have introduced some iterative schemes for finding a common element of a set of the
solutions of the equilibrium problems and a set of the fixed points of finitely many nonexpansive mappings, see \cite{yao} and the references therein.

Let $\varphi :C\to \mathbb{R}$ be a real-valued function and $\Theta: C\times C\to \mathbb{R}$ be an equilibrium bifunction. The mixed equilibrium problem (for short, MEP) is to find $x^{*}\in C$ such that
$$\text{MEP}: \Theta (x^{*},y)+\varphi(y)\geq \varphi(x^{*}), \ \ \forall y\in C.$$
In particular, if $\varphi \equiv 0$, this problem reduces to the equilibrium problem (for short, EP), which is to find $x^{*}\in C$ such that 
$$\text{EP}: \Theta (x^{*}, y)\geq 0, \ \ \forall y\in C.$$
The mixed equilibrium problems include fixed point problems, optimization problems, variational inequality problems, Nash equilibrium problems and the equilibrium problems as special cases; see for example \cite{blu}, \cite{cha},\cite{cha2} and \cite{kon}.

In \cite{rei}, Reich and Sabach proposed an algorithm for finding a common fixed point of finitely many Bregman strongly nonexpansive mappings $T_{i}:C\to C (i=1,2,\ldots, N)$ satisfying $\cap_{i=1}^{N}F(T_{i})\neq \emptyset$ in a reflexive Banach space $E$ as follows:
\begin{eqnarray*}
x_{0}&\in & E, \text{chosen arbitrarily,}\\
y_{n}^{i}&=&T_{i}(x_{n}+e_{n}^{i}),\\
C_{n}^{i}&=&\{z\in E : D_{f}(z,y_{n}^{i})\leq D_{f}(z,x_{n}+e_{n}^{i})\},\\
C_{n}&=&\cap_{i=1}^{N}C_{n}^{i},\\
Q_{n}^{i}&=&\{z\in E : \langle \nabla f(x_{0})-\nabla f(x_{n}), z-x_{n}\rangle\leq 0\},\\
x_{n+1}&=&proj_{C_{n}\cap Q_{n}}^{f}(x_{0}), \ \ \forall n\geq0,
\end{eqnarray*}
and
\begin{eqnarray*}
x_{0}&\in & E,\\
C_{0}^{i}&=&E, i=1,2,\ldots,N,\\
y_{n}^{i}&=&T_{i}(\nu_{n}+e_{n}^{i}),\\
C_{n+1}^{i}&=&\{z\in C_{n}^{i} : D_{f}(z,y_{n}^{i})\leq D_{f}(z,x_{n}+e_{n}^{i})\},\\
C_{n+1}&=&\cap_{i=1}^{N}C_{n+1}^{i},\\
x_{n+1}&=&proj_{C_{n+1}}(x_{0}), \ \ \forall n\geq0,
\end{eqnarray*}
where $proj_{C}^{f}$ is the Bregman projection with respect to $f$ from E onto  a closed and convex subset $C$ of $E$. They proved that the sequence $\{x_{n}\}$ converges strongly to a common fixed point of $\{T_{i}\}_{i=1}^{N}$.

In \cite{sua}, Suantai, et al used the following Halpern's iterative scheme for Bregman strongly nonexpansive self mapping $T$ on $E$; for $x_{1}\in E$ let $\{x_{n}\}$ be a sequence defined by
$$x_{n+1}=\nabla f^{*}(\alpha_{n}\nabla f(u)+(1-\alpha_{n})\nabla f(Tx_{n})), \ \ \forall n\geq1,$$
where $\{\alpha_{n}\}$ satisfying $\lim_{n\to\infty}\alpha_{n}=0$ and $\sum_{n=1}^{\infty}\alpha_{n}=\infty$. They proved that above sequence converges strongly to a fixed point of $T$.

In \cite{zeg}, Zegeye presented the following iterative scheme:
$$x_{n+1}=Proj_{C}^{f}\nabla f^{*}(\alpha_{n}\nabla f(u)+(1-\alpha_{n})\nabla f(Tx_{n}),$$
where $T=T_{N}\circ T_{N-1}\circ \ldots\circ T_{1}$. He proved that above sequence converges strongly to a common fixed point of a finite family of Bregman strongly nonexpansive mappings on a nonempty, closed and convex subset $C$ of $E$.

The authors of \cite{kum} introduced the following algorithm:
\begin{eqnarray}
x_{1}&=&x\in C \ \ \ \ \  \text{chosen arbitrarily},\nonumber\\
z_{n}&=&Res_{H}^{f}(x_{n}),\nonumber\\
y_{n}&=&\nabla f^{*}(\beta_{n}\nabla f(x_{n})+(1-\beta_{n})\nabla f(T_{n}(z_{n})))\nonumber\\
x_{n+1}&=&\nabla f^{*}(\alpha_{n}\nabla f(x_{n})+(1-\alpha_{n})\nabla f(T_{n}(y_{n}))),\label{mnb}
\end{eqnarray}
where $H$ is an equilibrium bifunction and $T_{n}$ is a Bregman strongly nonexpansive mapping for any  $n\in \mathbb{N}$. They proved the sequence (\ref{mnb})  converges strongly to the point $proj_{F(T)\cap EP(H)}x$.

In this paper, motivated by above algorithms, we study  the following iterative scheme:
\begin{eqnarray}
x_{1}&=&x\in C \ \ \ \ \  \text{chosen arbitrarily},\nonumber\\
z_{n}&=&Res_{\Theta,\varphi}^{f}(x_{n}),\nonumber\\
y_{n}&=&proj_{C}^{f}\nabla f^{*}(\beta_{n}\nabla f(x_{n})+(1-\beta_{n})\nabla f(T(z_{n})))\nonumber\\
x_{n+1}&=&proj_{C}^{f}\nabla f^{*}(\alpha_{n}\nabla f(x_{n})+(1-\alpha_{n})\nabla f (T(y_{n}))),\label{eqw}
\end{eqnarray}
where $\varphi :C\to \mathbb{R}$ is a real-valued function,  $\Theta: C\times C\to \mathbb{R}$ is an equilibrium bifunction and $T=T_{N}\circ T_{N-1}\circ \ldots\circ T_{1}$ which $T_{i}$ is a finite family of Bregman strongly nonexpansive mapping for each $i\in \{1,2,\ldots, N\}$. 
We will prove that the sequence $\{x_{n}\}$ defined in (\ref{eqw}) converges strongly to the point $proj_{(\cap_{i=1}^{N}F(T_{i}))\cap MEP(\Theta)}x$.
\section{Preliminaries}
For any $x\in \text{int}(\text{dom} f)$, the right-hand derivative of $f$ at $x$ in the derivation $y\in E$ is defined by
$$f^{'}(x,y):=\lim _{t\searrow0} \frac{f(x+ty)-f(x)}{t}.$$
The function $f$ is called G\^{a}teaux differentiable at $x$ if $\lim_{t\searrow0} \frac{f(x+ty)-f(x)}{t}$ exists for all $y\in E$. In this case, $f^{'}(x,y)$ coincides with $\nabla f(x)$, the value of the gradient ($\nabla f)$ of $f$ at $x$. The function $f$ is called G\^{a}teaux differentiable if it is G\^{a}teaux differentiable for any $x\in \text{int}(\text{dom} f)$ and $f$ is called Fr\'{e}chet differentiable at $x$ if this limit is attain uniformly for all $y$ which satisfies $\|y\|=1$. The function $f$ is uniformly Fr\'{e}chet differentiable on a subset $C$ of $E$ if the limit is attained uniformly for any $x\in C$ and $\|y\|=1$. It is known that if $f$ is G\^{a}teaux differentiable (resp. Fr\'{e}chet differentiable) on $\text{int}(\text{dom} f)$, then $f$ is continuous and its G\^{a}teaux derivative $\nabla f$ is norm-to-weak$^*$ continuous (resp. continuous) on $\text{int} (\text{dom}f)$ (see \cite{bon}).

Let $f: E\to (-\infty,+\infty]$ be a G\^{a}teaux differentiable function. The function $D_{f}: \text{dom} f\times \text{int}(\text{dom} f)\to [0,+\infty)$ defined as follows:
\begin{equation}\label{1}
D_{f}(x,y):=f(x)-f(y)-\langle \nabla f(y),x-y\rangle
\end{equation}
is called the Bregman distance with respect to $f$, \cite{cens}.

The Legendre function $f:E\to (-\infty,+\infty]$ is defined in \cite{bau}. It is well known that in reflexive spaces, $f$ is Legendre function if and only if it satisfies the following conditions:

($L_{1}$) The interior of the domain of $f$, $\text{int}(\text{dom} f)$, is nonempty, $f$ is G\^{a}teaux differentiable on $\text{int}(\text{dom} f)$ and $\text{dom} f=\text{int}( \text{dom} f)$;

($L_{2}$) The interior of the domain of $f^{*}$, $\text{int}( \text{dom} f^{*})$, is nonempty, $f^{*}$ is G\^{a}teaux differentiable on $\text{int}(\text {dom} f^{*})$ and $\text{dom} f^{*}= \text{int}( \text{dom} f^{*})$.

\noindent Since $E$ is reflexive, we know that $(\partial f)^{-1}=\partial f^{*}$ (see \cite{bon}). This , with ($L_{1}$) and ($L_{2}$), imply the following equalities: 
$$ \nabla f=(\nabla f^{*})^{-1},  \ \ \ \text{ran} \nabla f=\text{dom} \nabla f^{*}=\text{int}(\text{dom} f^{*})$$
 and $$\text {ran} \nabla f^{*}=\text{dom}(\nabla f)=\text {int}(\text{dom} f),$$ where $\text{ran}\nabla f$ denotes the range of $\nabla f$. 

When the subdifferential of $f$ is single-valued, it coincides with the gradient $\partial f=\nabla f$, \cite{phe}. By Bauschke
et al \cite{bau} the conditions ($L_{1}$) and ($L_{2}$) also yields that the function $f$ and $f^{*}$ are strictly convex on the interior of their respective domains.\\
If $E$ is a smooth and strictly convex Banach space, then an important and interesting Legendre function is $f(x):=\frac{1}{p}\|x\|^{p} (1<p<\infty).$ In this case the gradient $\nabla f$ of $f$ coincides with the generalized duality mapping of $E$, i.e., $\nabla f=J_{p} (1<p<\infty).$ In particular, $\nabla f=I$, the identity mapping in Hilbert spaces. From now on we assume that the convex function $f:E\to (-\infty, \infty]$ is Legendre.
\begin{definition}
Let $f:E\to (-\infty,+\infty]$ be a convex and G\^{a}teaux differentiable function. The Bregman projection of $x\in \text{int}(\text{dom} f)$ onto the nonempty, closed and convex subset $C\subset \text{dom} f$ is the necessary unique vector $proj_{C}^{f}(x)\in C$ satisfying 
$$D_{f}(proj_{C}^{f}(x),x)=\inf\{D_{f}(y,x) : y\in C\}.$$
\end{definition}
\begin{remark}
If $E$ is a smooth and strictly convex Banach space and $f(x)=\|x\|^{2}$ for all $x\in E$, then we have that $\nabla f(x)=2Jx$ for all $x\in E$, where $J$ is the normalized duality mapping from $E$ in to $2^{E^{*}}$, and hence $D_{f}(x,y)$ reduced to $\phi(x,y)=\|x\|^{2}-2\langle x,Jy\rangle +\|y\|^{2}$, for all $x,y\in E$, which is the Lyapunov function introduced by Alber \cite{alb} and Bregman projection $proj_{C}^{f}(x)$ reduces to the generalized projection $\Pi_{C}(x)$ which is defined by
$$\phi(\Pi_{C}(x),x)=\min_{ y\in C} \phi(y,x).$$
If $E=H$, a Hilbert space, $J$ is  the identity mapping and hence Bregman projection $proj_{C}^{f}(x)$ reduced to the metric projection of $H$ onto $C$, $P_{C}(x)$.
\end{remark}

\begin{definition}\cite{but2}
Let $f:E\to(-\infty,+\infty]$ be a convex and G\^{a}teaux differentiable function. $f$ is called:
\begin{enumerate}
\item
\textit{totally convex} at $x\in \text{int}(\text{dom} f)$ if its modulus of total convexity at $x$, that is, the function $\nu_{f}:\text{int}(\text{dom} f)\times[0,+\infty)\to[0,+\infty)$ defined by 
$$\nu_{f}(x,t):=\inf\{D_{f}(y,x): y\in \text{dom}f, \|y-x\|=t\},$$
is positive whenever $t>0$;
\item
totally convex if it is totally convex at every point $x\in \text{int}(\text{dom} f)$;
\item
totally convex on bounded sets if $\nu_{f}(B,t)$ is positive for any nonempty bounded subset $B$ of $E$ and $t>0$, where the modulus of total convexity of the function $f$ on the set $B$ is the function $\nu_{f}:\text{int}(\text{dom} f)\times [0,+\infty)\to [0,+\infty)$ defined by 
$$\nu_{f}(B,t):=\inf\{\nu_{f}(x,t): x\in B\cap \text{dom} f\}.$$
\end{enumerate}
\end{definition}
The set $lev_{\leq}^{f}(r)=\{x\in E: f (x)\leq r\}$ for some $r\in\mathbb{R}$ is called a sublevel of $f$.
\begin{definition}\cite{but2,rei}
The function $f:E\to(-\infty,+\infty]$ is called;
\begin{enumerate}
\item
\textit{cofinite} if $\text{dom} f^{*}=E^{*}$;
\item
\textit{coercive} \cite{hir} if the sublevel set of $f$ is bounded; equivalently,
$$\lim_{\|x\|\to+\infty}f(x)=+\infty;$$
\item
\textit{strongly coercive} if $\lim_{\|x\|\to+\infty}\frac{f(x)}{\|x\|}=+\infty$;
\item
\textit{sequentially consistent} if for any two sequences $\{x_{n}\}$ and $\{y_{n}\}$ in $E$ such that $\{x_{n}\}$ is bounded,
$$\lim_{n\to\infty} D_{f}(y_{n},x_{n})=0\Rightarrow \lim_{n\to\infty}\|y_{n}-x_{n}\|=0.$$
\end{enumerate}
\end{definition}
\begin{lemma}\cite{but}\label{lem6}
The function $f$ is totally convex on bounded subsets if and only if it is sequentially consistent.
\end{lemma}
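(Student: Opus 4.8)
The plan is to prove the two implications separately, the crucial auxiliary fact being a monotonicity property of the modulus of total convexity: for fixed $x$ the map $t \mapsto \nu_f(x,t)$ is nondecreasing on $(0,+\infty)$. To establish this I would fix $x \in \text{int}(\text{dom} f)$ and $0 < t_1 < t_2$, take any admissible $y$ with $\|y - x\| = t_2$, and set $z = \lambda y + (1-\lambda)x$ with $\lambda = t_1/t_2 \in (0,1)$, so that $\|z - x\| = \lambda t_2 = t_1$. Since the function $D_f(\cdot, x) = f(\cdot) - f(x) - \langle \nabla f(x), \cdot - x\rangle$ is convex in its first argument and vanishes at $x$, convexity gives $D_f(z,x) \leq \lambda D_f(y,x) + (1-\lambda)D_f(x,x) = \lambda D_f(y,x)$, hence $D_f(y,x) \geq (t_2/t_1)\, D_f(z,x) \geq (t_2/t_1)\,\nu_f(x,t_1)$. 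Taking the infimum over all $y$ with $\|y-x\|=t_2$ yields $\nu_f(x,t_2) \geq (t_2/t_1)\,\nu_f(x,t_1) \geq \nu_f(x,t_1)$, which is the desired monotonicity (and in fact the stronger statement that $\nu_f(x,t)/t$ is nondecreasing).

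For the forward implication, assume $f$ is totally convex on bounded sets and let $\{x_n\}$ be bounded with $D_f(y_n,x_n) \to 0$; I would argue by contradiction. If $\|y_n - x_n\| \not\to 0$, pass to a subsequence along which $\|y_{n_k} - x_{n_k}\| \geq \varepsilon > 0$. Enclosing $\{x_n\}$ in a bounded set $B$ and using the trivial bound $D_f(y,x) \geq \nu_f(x, \|y-x\|)$ together with the monotonicity above, I obtain $D_f(y_{n_k}, x_{n_k}) \geq \nu_f(x_{n_k}, \varepsilon) \geq \nu_f(B,\varepsilon)$, and the right-hand side is strictly positive by total convexity on bounded sets. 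This contradicts $D_f(y_{n_k}, x_{n_k}) \to 0$, so $\|y_n-x_n\|\to 0$.

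For the converse, assume $f$ is sequentially consistent but fails to be totally convex on some bounded set $B$ at some level $t_0 > 0$, i.e. $\nu_f(B,t_0) = 0$. Then there is a sequence $x_n \in B \cap \text{dom} f$ with $\nu_f(x_n,t_0) \to 0$, and for each $n$ I may choose, by definition of the infimum, a point $y_n$ with $\|y_n - x_n\| = t_0$ and $D_f(y_n,x_n) \leq \nu_f(x_n,t_0) + 1/n$, so that $D_f(y_n,x_n) \to 0$. Since $\{x_n\} \subset B$ is bounded, sequential consistency forces $\|y_n - x_n\| \to 0$, contradicting $\|y_n - x_n\| = t_0 > 0$.

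The only real technical content is the monotonicity lemma in the first paragraph; once it is in hand, both implications are short contradiction arguments. I expect the main care needed is the bookkeeping with the approximate-infimum selection of the $y_n$ and the passage to subsequences, together with verifying that $\nu_f(B,t)>0$ is exactly the negation used in the converse.
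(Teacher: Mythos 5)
The paper does not prove this lemma at all; it is quoted verbatim from Butnariu--Resmerita \cite{but} (see also Lemma \ref{jad}(1)), so there is no in-paper argument to compare against. Your proof is correct and is essentially the standard one from that reference: the key point is exactly the monotonicity estimate $\nu_f(x,t_2)\geq (t_2/t_1)\,\nu_f(x,t_1)$ for $0<t_1<t_2$, obtained as you do from the convexity of $D_f(\cdot,x)$ and the fact that it vanishes at $x$ (noting that the convex combination $z$ stays in $\operatorname{dom} f$), after which the forward implication follows from $D_f(y_n,x_n)\geq \nu_f(B,\varepsilon)>0$ along the bad subsequence and the converse from the approximate-infimum selection of $y_n$ at distance $t_0$.
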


\begin{lemma}\cite[Proposition 2.3]{rei}
If $f:E\to(-\infty,+\infty]$ is Fr\'{e}chet differentiable and totally convex, then $f$ is cofinite.
\end{lemma}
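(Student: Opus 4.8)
The plan is to show directly that $\text{dom} f^{*}=E^{*}$, that is, that $f^{*}(\xi)<+\infty$ for every $\xi\in E^{*}$. First I would fix a reference point $x_{0}\in\text{int}(\text{dom} f)$ and set $\xi_{0}=\nabla f(x_{0})$. Using the very definition $D_{f}(y,x_{0})=f(y)-f(x_{0})-\langle\xi_{0},y-x_{0}\rangle$, a short computation gives, for every $\eta\in E^{*}$,
\begin{equation*}
\sup_{y\in E}\big[\langle\eta,y-x_{0}\rangle-D_{f}(y,x_{0})\big]=f^{*}(\xi_{0}+\eta)-\langle\xi_{0}+\eta,x_{0}\rangle+f(x_{0}).
\end{equation*}
Since $\xi_{0}+\eta$ ranges over all of $E^{*}$ as $\eta$ does, this shows that $f$ is cofinite if and only if the nonnegative convex function $g:=D_{f}(\cdot,x_{0})$ is cofinite. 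This reduction is convenient because $g\geq0$, $g(x_{0})=0$, and by the definition of the modulus of total convexity one has the lower bound $g(y)\geq\nu_{f}(x_{0},\|y-x_{0}\|)$.

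Next I would invoke the classical duality between cofiniteness and growth: a proper, lower semicontinuous, convex function on a Banach space has full conjugate domain if and only if it is supercoercive, i.e. $g(y)/\|y\|\to+\infty$ as $\|y\|\to+\infty$. (Once $g^{*}$ is finite everywhere it is automatically continuous, being a finite, lower semicontinuous, convex function on a Banach space, so no separate continuity check is needed.) In view of the lower bound just recorded, it therefore suffices to establish
\begin{equation*}
\frac{\nu_{f}(x_{0},t)}{t}\longrightarrow+\infty\qquad(t\to+\infty).
\end{equation*}

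The heart of the matter is this superlinear growth of the modulus, and I expect it to be the main obstacle. I would begin from the monotonicity property of the modulus of total convexity, namely that $t\mapsto\nu_{f}(x_{0},t)/t$ is nondecreasing on $(0,+\infty)$; combined with total convexity (positivity of $\nu_{f}(x_{0},t)$ for every $t>0$) this already yields a linear lower bound $\nu_{f}(x_{0},t)\geq c\,t$ for large $t$, hence coercivity of $g$ and finiteness of $f^{*}$ near $\xi_{0}$. The genuinely delicate step is upgrading this linear estimate to a superlinear one, and it is here that Fr\'{e}chet differentiability — i.e. the norm-to-norm continuity of $\nabla f$ — must be used in full strength, rather than mere G\^{a}teaux differentiability. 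My plan would be a contradiction argument: if $\nu_{f}(x_{0},t)/t$ stayed bounded, one would produce a direction along which $f$ is asymptotically affine, and I would try to exhibit the incompatibility of such behaviour with Fr\'{e}chet differentiability together with the strict convexity forced by total convexity. I anticipate that making this incompatibility precise is exactly where the argument is hard, and possibly where the full form of the hypothesis (for instance total convexity on bounded sets, via the sequential consistency of Lemma \ref{lem6} and the resulting control of $\|y-x_{0}\|$ by $D_{f}(y,x_{0})$) is really needed.

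Once the superlinear bound $\nu_{f}(x_{0},t)/t\to+\infty$ is secured, $g=D_{f}(\cdot,x_{0})$ is supercoercive, hence cofinite by the duality above, and by the reduction in the first paragraph $f$ itself is cofinite, that is, $\text{dom} f^{*}=E^{*}$. This would complete the proof.
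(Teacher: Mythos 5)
The paper offers no proof of this lemma at all --- it is imported verbatim as Proposition 2.3 of \cite{rei} --- so there is no internal argument to compare yours against; I can only judge the proposal on its own terms. Your opening reduction is sound: the identity relating $\sup_{y}\bigl[\langle\eta,y-x_{0}\rangle-D_{f}(y,x_{0})\bigr]$ to $f^{*}(\xi_{0}+\eta)$ is correct, and the implication you actually use (supercoercivity of $g=D_{f}(\cdot,x_{0})$ implies $\text{dom}\,g^{*}=E^{*}$) is the true direction of the duality you invoke. Do note, however, that the ``if and only if'' you state is false in infinite dimensions: full conjugate domain only gives that $g^{*}$ is finite, hence continuous and locally bounded, whereas supercoercivity of $g$ is equivalent to $g^{*}$ being bounded above on bounded sets, a strictly stronger property. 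Since you only use the correct direction, this does not by itself damage the argument.

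The genuine gap is the one you flagged yourself, and it is fatal to this route: the superlinear growth $\nu_{f}(x_{0},t)/t\to+\infty$ is never established, and it cannot be derived from the stated hypotheses. Take $E=\RR$ and $f(x)=\sqrt{1+x^{2}}$. This $f$ is real-analytic (hence Fr\'{e}chet differentiable) and strictly convex, so $\nu_{f}(x,t)=\min\{D_{f}(x+t,x),\,D_{f}(x-t,x)\}>0$ for every $x$ and $t>0$; thus $f$ is totally convex, and even totally convex on bounded sets. Yet $\nu_{f}(0,t)/t=(\sqrt{1+t^{2}}-1)/t$ increases to $1$, not to $+\infty$, and $f^{*}(\xi)=+\infty$ for $|\xi|>1$, so $f$ is not cofinite. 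The monotonicity of $t\mapsto\nu_{f}(x_{0},t)/t$ therefore only ever buys you a linear lower bound, which places a ball of radius $\lim_{t\to\infty}\nu_{f}(x_{0},t)/t$ about $\nabla f(x_{0})$ inside $\text{dom}\,f^{*}$ --- exactly what happens in the example, where these balls exhaust $(-1,1)$ and nothing more. In particular the contradiction you hope to extract (``an asymptotically affine direction is incompatible with Fr\'{e}chet differentiability plus total convexity'') does not exist: the example realizes such a direction while satisfying both hypotheses. Before investing further effort you should consult the precise statement and proof of Proposition 2.3 in \cite{rei}; as quoted here the lemma requires a strengthened hypothesis (for instance strong coercivity of $f$, or $\lim_{t\to\infty}\nu_{f}(x,t)/t=+\infty$ at some point), and any correct proof must consume that extra hypothesis at precisely the step you identified as the hard one.
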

\begin{lemma}\cite{but}\label{jad}
Let $f:E\to(-\infty,+\infty]$ be a convex function whose domain contains at least two points.Then the following statements hold:
\begin{enumerate}
\item
$f$ is sequentially consistent if and only if it is totally convex on bounded sets;

\item
If $f$ is lower semicontinuous, then $f$ is sequentially consistent if and only if it is uniformly convex on bounded sets;

\item
If $f$ is uniformly strictly convex on bounded sets, then it is sequentially consistent and the converse implication holds when $f$ is lower semicontinuous, Fr\'{e}chet differentiable on its domain and Fr\'{e}chet derivative $\nabla f$ is uniformly continuous on bounded sets.

\end{enumerate}

\end{lemma}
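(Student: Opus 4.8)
The plan is to prove the three equivalences in turn, in each case isolating an \emph{easy} direction — passing from a uniform-type convexity to sequential consistency by means of a pointwise lower bound on the Bregman distance — from a \emph{hard} direction, in which a uniform modulus must be reconstructed from sequential consistency and for which the extra hypotheses are exactly what is needed.

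Part (1) is nothing but Lemma~\ref{lem6}, and for completeness I would record the two short contradiction arguments. First note that $\nu_f(x,\cdot)$ is nondecreasing: for $0<s<t$, convexity of $D_f(\cdot,x)$ together with $D_f(x,x)=0$ gives $D_f\big(x+\tfrac{s}{t}(y-x),x\big)\leq \tfrac{s}{t}D_f(y,x)$, so $\nu_f(x,t)\geq\nu_f(x,s)$. Now if $f$ is totally convex on bounded sets, $\{x_n\}$ is bounded and $D_f(y_n,x_n)\to0$ while $\|y_n-x_n\|\not\to0$, then along a subsequence $\|y_{n_k}-x_{n_k}\|\geq\varepsilon$, whence $D_f(y_{n_k},x_{n_k})\geq\nu_f(x_{n_k},\varepsilon)\geq\nu_f(B,\varepsilon)>0$ for a bounded $B\supset\{x_n\}$, a contradiction. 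Conversely, if total convexity fails on some bounded $B$ at level $t_0$, choosing $x_n\in B$ with $\nu_f(x_n,t_0)\to0$ and near-minimal $y_n$ with $\|y_n-x_n\|=t_0$ produces $D_f(y_n,x_n)\to0$ without $\|y_n-x_n\|\to0$, contradicting sequential consistency.

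For part (2), by part (1) it suffices to show that, for lower semicontinuous $f$, total convexity on bounded sets is equivalent to uniform convexity on bounded sets. The implication ``uniformly convex $\Rightarrow$ totally convex'' needs no lower semicontinuity: writing the uniform-convexity inequality for the segment $x+\beta(y-x)$, dividing by $\beta$ and letting $\beta\searrow0$ converts it into $D_f(y,x)\geq\delta_f(\|y-x\|)$, where $\delta_f$ is the (positive) modulus of uniform convexity on the $t$-enlargement of $B$; this yields $\nu_f(B,t)>0$. The reverse implication is the substantive one: a \emph{uniform} modulus of convexity must be manufactured from the merely pointwise positivity of $\nu_f$, and here lower semicontinuity enters to ensure that the relevant infima are attained along convergent sequences, so that the natural contradiction argument — assuming the modulus vanishes at some level and extracting near-extremal pairs — can be closed by appeal to sequential consistency of a bounded sequence.

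Part (3) follows the same template. The forward implication, uniform strict convexity on bounded sets $\Rightarrow$ sequential consistency, is again obtained by turning the strict-convexity inequality into a pointwise lower bound on $D_f$ and invoking part (1). The converse is the expected main obstacle: from sequential consistency one controls $D_f(y,x)$ only along sequences, and to upgrade this to a uniform strict-convexity modulus one must compare $D_f(y,x)$ with the symmetric quantity measuring strict convexity and transfer estimates between neighbouring base points. This is precisely the role of Fr\'echet differentiability together with uniform continuity of $\nabla f$ on bounded sets: it renders $x\mapsto\nabla f(x)$, and hence $x\mapsto D_f(\cdot,x)$, uniformly well behaved on bounded sets, so that a sequential/pointwise estimate can be propagated uniformly. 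I would close this step by contradiction, choosing sequences that witness the failure of the uniform modulus and using the derivative-regularity to force $D_f(y_n,x_n)\to0$ with $\|y_n-x_n\|$ bounded away from $0$, contradicting sequential consistency.
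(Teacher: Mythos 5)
The paper does not prove this lemma at all: it is imported verbatim from \cite{but} (and item (1) duplicates Lemma~\ref{lem6}), so there is no in-paper argument to compare yours against; I can only judge the proposal on its own terms. Your treatment of part (1) is complete and correct: the monotonicity of $\nu_f(x,\cdot)$ via convexity of $D_f(\cdot,x)$ and $D_f(x,x)=0$, the forward implication via $D_f(y_{n_k},x_{n_k})\ge\nu_f(B,\varepsilon)>0$, and the converse via near-minimizers at a level $t_0$ where $\nu_f(B,t_0)=0$ are exactly the standard argument.

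Parts (2) and (3), however, are only announced, not proved, and the announced strategy for the hard directions does not close as described. For (2), the substantive implication is that sequential consistency forces uniform convexity on bounded sets; your sketch says lower semicontinuity ``ensures that the relevant infima are attained along convergent sequences,'' but attainment of infima is not the mechanism and is not available in a general reflexive space. The actual key is the midpoint identity: if $z_n=\tfrac12(x_n+y_n)$, then the linear terms cancel and $D_f(x_n,z_n)+D_f(y_n,z_n)=f(x_n)+f(y_n)-2f(z_n)$, so failure of uniform convexity at level $t_0$ (i.e.\ the right-hand side tending to $0$ with $\|x_n-y_n\|=t_0$ in a bounded set) immediately yields a bounded sequence $\{z_n\}$ with $D_f(x_n,z_n)\to0$ and $\|x_n-z_n\|=t_0/2$, contradicting sequential consistency; the regularity hypotheses are needed to make $D_f(\cdot,z_n)$ meaningful at the midpoints, not to attain infima. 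Without this (or an equivalent) identity your ``natural contradiction argument'' has no bridge from the convexity gap to a Bregman distance. Part (3)'s converse has the same defect: you correctly identify that uniform continuity of $\nabla f$ on bounded sets is what lets a sequential estimate be made uniform, but you never exhibit the comparison between $D_f(y,x)$ and the strict-convexity functional that the argument must run through, so as written it is a plausible roadmap rather than a proof. To make the submission self-contained you would need to supply these two computations (or simply cite \cite{but}, as the paper does).
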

\begin{lemma}\cite[Proposition 2.1]{rei4}\label{lem7}
Let $f:E\to\mathbb{R}$ be uniformly Fr\'{e}chet differentiable and bounded on bounded subsets of $E$. Then $\nabla f$ is uniformly continuous on bounded subsets of $E$ from the strong topology of $E$ to the strong topology of $E^{*}$.
\end{lemma}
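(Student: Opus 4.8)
The plan is to unwind both hypotheses into quantitative statements and then combine them with the right order of quantifiers. Fix a bounded set $B$ and choose $R>0$ with $B\subseteq\{x:\|x\|\le R\}$; all estimates will be carried out on the slightly larger ball $B':=\{x:\|x\|\le R+1\}$, so that points of the form $x+ty$ with $x\in B$, $\|y\|=1$ and $0<t\le 1$ stay inside $B'$. Uniform Fr\'{e}chet differentiability on $B'$ says precisely that for every $\varepsilon>0$ there is $t_{0}\in(0,1]$ such that
$$\left|\frac{f(x+ty)-f(x)}{t}-\langle\nabla f(x),y\rangle\right|\le\varepsilon\qquad(0<t\le t_{0},\ x\in B',\ \|y\|=1).$$
The second hypothesis, together with the standing convexity of $f$, supplies a Lipschitz constant: a convex function that is bounded on bounded sets is Lipschitz on bounded sets, so there is $L>0$ with $|f(u)-f(v)|\le L\|u-v\|$ for all $u,v\in B'$.

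Next I would estimate $\langle\nabla f(x)-\nabla f(x'),y\rangle$ for $x,x'\in B$ and $\|y\|=1$ by comparing each gradient with a difference quotient at one \emph{fixed} small step $t$. Writing $\langle\nabla f(x),y\rangle$ and $\langle\nabla f(x'),y\rangle$ through the displayed remainder bound and subtracting gives
$$|\langle\nabla f(x)-\nabla f(x'),y\rangle|\le\frac{\bigl|[f(x+ty)-f(x'+ty)]-[f(x)-f(x')]\bigr|}{t}+2\varepsilon,$$
and the Lipschitz bound controls the first term by $2L\|x-x'\|/t$.

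The decisive point, and the only place where care is needed, is the order in which the constants are chosen. Given $\varepsilon>0$, I first fix $t:=t_{0}/2$ (depending only on $\varepsilon$ and $B'$), which freezes the remainder error at $2\varepsilon$; only then do I choose $\delta:=\varepsilon t/(2L)$. For $\|x-x'\|<\delta$ the difference-quotient term is at most $\varepsilon$, whence $|\langle\nabla f(x)-\nabla f(x'),y\rangle|\le 3\varepsilon$, and taking the supremum over $\|y\|=1$ yields $\|\nabla f(x)-\nabla f(x')\|_{*}\le 3\varepsilon$. This is exactly uniform continuity of $\nabla f$ on $B$ from the strong topology of $E$ to that of $E^{*}$.

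I expect the main obstacle to be resisting the temptation to let $t\to 0$ inside the estimate: the difference-quotient term carries a factor $1/t$, so sending $t$ to zero to kill the remainder would blow up the Lipschitz contribution. The argument only closes because $t$ is held fixed once $\varepsilon$ is given, and $\delta$ is then taken small relative to $t$. A secondary technical point is that the Lipschitz estimate requires convexity of $f$; this is harmless here since $f$ is assumed Legendre throughout, but it must be invoked explicitly, as boundedness on bounded sets alone does not yield a Lipschitz bound for non-convex functions.
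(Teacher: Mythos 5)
Your argument is correct. Note first that the paper does not prove this lemma at all: it is imported verbatim as Proposition 2.1 of Reich--Sabach \cite{rei4}, so there is no in-paper proof to compare against. Your self-contained derivation is essentially the standard argument behind that cited result (one direction of the known equivalence between uniform Fr\'{e}chet differentiability plus boundedness on bounded sets and uniform norm-to-norm continuity of $\nabla f$, cf. Z\u{a}linescu): write each gradient as a difference quotient at a \emph{fixed} step $t$ plus a remainder controlled uniformly by the differentiability hypothesis, and control the difference of difference quotients by a Lipschitz bound coming from convexity plus boundedness on bounded sets. You correctly identify the two points where the argument could go wrong: the order of quantifiers (fix $t$ depending only on $\varepsilon$, then choose $\delta$ relative to $t$, rather than letting $t\to0$), and the fact that the Lipschitz estimate genuinely uses convexity of $f$, which holds here since $f$ is Legendre by standing assumption. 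The only cosmetic imprecision is that the uniform Lipschitz constant on $B'=\{\|x\|\le R+1\}$ should be extracted from the bound of $|f|$ on a slightly larger ball, say $\{\|x\|\le R+2\}$; since $f$ is bounded on \emph{all} bounded subsets this costs nothing, but it is worth stating. With that adjustment the proof is complete and correct.
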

\begin{lemma}\cite[Lemma 3.1]{rei}\label{taz}
Let $f:E\to \mathbb{R}$ be a G\^{a}teaux differentiable and totally convex function. If $x_{0}\in E$ and the sequence $\{D_{f}(x_{n},x_{0})\}$ is bounded, then the sequence $\{x_{n}\}$ is also bounded.

\end{lemma}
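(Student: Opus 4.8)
The plan is to argue by contradiction, reducing everything to a single scalar estimate that links the Bregman distance $D_f(\cdot,x_0)$ to the modulus of total convexity $\nu_f(x_0,\cdot)$. Since here $f$ is real-valued we have $\mathrm{dom}\, f = E = \mathrm{int}(\mathrm{dom}\, f)$, so $x_0\in\mathrm{int}(\mathrm{dom}\, f)$, the gradient $\nabla f(x_0)$ exists, and $\nu_f(x_0,t)$ is well defined for every $t\ge 0$; moreover total convexity of $f$ gives $\nu_f(x_0,t)>0$ for all $t>0$. The first, elementary, observation is that for any $y\in E$ one has
$$D_f(y,x_0)\ \ge\ \nu_f\bigl(x_0,\|y-x_0\|\bigr),$$
directly from the definition of $\nu_f$, since $\nu_f(x_0,\|y-x_0\|)$ is the infimum of $D_f(\cdot,x_0)$ over the entire sphere of radius $\|y-x_0\|$ centered at $x_0$.

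The key step is a superhomogeneity property of $t\mapsto \nu_f(x_0,t)$, namely $\nu_f(x_0,\mu t)\ge \mu\,\nu_f(x_0,t)$ for all $\mu\ge 1$ and $t>0$. I would prove this from convexity of the first argument of the Bregman distance: the map $y\mapsto D_f(y,x_0)=f(y)-f(x_0)-\langle\nabla f(x_0),y-x_0\rangle$ is convex, being $f$ plus an affine term, and it vanishes at $y=x_0$. Hence, given any $y$ with $\|y-x_0\|=\mu t$, the point $z:=x_0+\tfrac{1}{\mu}(y-x_0)=\bigl(1-\tfrac1\mu\bigr)x_0+\tfrac1\mu y$ lies on the segment $[x_0,y]$ and satisfies $\|z-x_0\|=t$, so convexity yields
$$D_f(z,x_0)\ \le\ \Bigl(1-\tfrac1\mu\Bigr)D_f(x_0,x_0)+\tfrac1\mu D_f(y,x_0)\ =\ \tfrac1\mu D_f(y,x_0).$$
Combining this with the elementary bound gives $D_f(y,x_0)\ge \mu D_f(z,x_0)\ge \mu\,\nu_f(x_0,t)$, and taking the infimum over all $y$ with $\|y-x_0\|=\mu t$ produces the claimed inequality.

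Finally I would assemble the two facts. Let $M>0$ satisfy $D_f(x_n,x_0)\le M$ for all $n$, and suppose for contradiction that $\{x_n\}$ is unbounded, so that along a subsequence $\mu_n:=\|x_n-x_0\|\to\infty$; in particular $\mu_n\ge 1$ for all large $n$. For such $n$,
$$M\ \ge\ D_f(x_n,x_0)\ \ge\ \nu_f(x_0,\mu_n)\ =\ \nu_f(x_0,\mu_n\cdot 1)\ \ge\ \mu_n\,\nu_f(x_0,1).$$
Since $\nu_f(x_0,1)>0$ by total convexity, this forces $\mu_n\le M/\nu_f(x_0,1)$, contradicting $\mu_n\to\infty$. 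Hence $\{x_n\}$ is bounded. The only genuinely substantial point is the superhomogeneity estimate for $\nu_f$; everything else follows immediately from the definitions, so I expect that convexity argument to be the crux of the proof.
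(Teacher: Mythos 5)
Your argument is correct: the pointwise bound $D_f(y,x_0)\ge\nu_f(x_0,\|y-x_0\|)$ together with the superhomogeneity $\nu_f(x_0,\mu t)\ge\mu\,\nu_f(x_0,t)$ for $\mu\ge 1$ (which you correctly derive from convexity of $y\mapsto D_f(y,x_0)$ and $D_f(x_0,x_0)=0$) immediately forces $\|x_n-x_0\|\le M/\nu_f(x_0,1)$ whenever $\|x_n-x_0\|\ge 1$. The paper itself only cites this lemma without proof, and your argument is essentially the standard one from the cited source (Reich--Sabach, via the Butnariu--Iusem property of the modulus of total convexity), so there is nothing to add.
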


Let $T:C\to C$ be a nonlinear mapping. The fixed points set of $T$ is denoted by $F(T)$, that is $F(T)=\{x\in C: Tx=x\}$. A mapping $T$ is said to be nonexpansive if $\|Tx-Ty\|\leq \|x-y\|$ for all $x,y\in C$. $T$ is said to be quasi-nonexpansive if $F(T)\neq \emptyset$ and $\|Tx-p\|\leq \|x-p\|,$ for all $x\in C$ and $p\in F(T)$. A point $p\in C$ is called an asymptotic fixed point of $T$ (see \cite{rei2}) if $C$ contains a sequence $\{x_{n}\}$ which converges weakly to $p$ such that $\lim_{n\to\infty}\|x_{n}-Tx_{n}\|=0$. We denote by $\widehat{F}(T)$ the set of asymptotic fixed points of $T$.

A mapping $T:C\to\text{int}(\text{dom} f)$ with $F(T)\neq\emptyset$ is called:
\begin{enumerate}
\item quasi-Bregman nonexpansive \cite{rei} with respect to $f$ if 
$$D_{f}(p,Tx)\leq D_{f}(p,x), \forall x\in C, p\in F(T).$$
\item
Bregman relatively nonexpansive \cite{rei} with respect to $f$ if,
$$ D_{f}(p,Tx)\leq D_{f}(p,x), \ \ \forall x\in C, p\in F(T), \ \ \ \text{and} \ \ \widehat{F}(T)=F(T).  $$
\item
Bregman strongly nonexpansive (see \cite{bru,rei}) with respect to $f$ and $\widehat{F}(T)$ if,
$$ D_{f}(p,Tx)\leq D_{f}(p,x), \ \ \forall x\in C, p\in \widehat{F}(T) $$
and, if whenever $\{x_{n}\}\subset C$ is bounded, $p\in \widehat{F}(T)$, and 
$$\lim_{z\to\infty}(D_{f}(p,x_{n})-D_{f}(p,Tx_{n}))=0,$$
it follows that 
$$\lim_{n\to\infty} D_{f}(x_{n},Tx_{n})=0.$$
\item
Bregman firmly nonexpansive (for short BFNE) with respect to $f$ if, for all $x,y\in C$,
$$\langle \nabla f(Tx)-\nabla f(Ty),Tx-Ty\rangle \leq \langle \nabla f(x)-\nabla f(y),Tx-Ty\rangle$$
equivalently,
\begin{equation}\label{5}
D_{f}(Tx,Ty)+D_{f}(Ty,Tx)+D_{f}(Tx,x)+D_{f}(Ty,y)\leq D_{f}(Tx,y)+D_{f}(Ty,x).
\end{equation}
\end{enumerate}
The existence and approximation of Bregman firmly nonexpansive mappings was studied in \cite{rei2}. It is also known that if $T$ is Bregman firmly nonexpansive and $f$ is Legendre function which is bounded, uniformly Fr\'{e}chet differentiable and totally convex on bounded subset of $E$, then $F(T)=\widehat{F}(T)$ and $F(T)$ is closed and convex. It also follows that every Bregman firmly nonexpansive mapping is Bregman strongly nonexpansive with respect to $F(T)=\widehat{F}(T)$.

\begin{lemma}\label{niaz}\cite{but}
Let $C$ be a nonempty, closed  and convex subset of $E$. Let $f:E\to \mathbb{R}$ be a G\^{a}teaux differentiable and totally convex function. Let $x\in E$, then \\
1) $z=proj_{C}^{f}(x)$ if and only if 
$$\langle \nabla f(x)-\nabla f(z),y-z\rangle \leq 0, \ \ \ \forall y\in C.$$
2)
 $D_{f}(y,proj_{C}^{f}(x))+D_{f}(proj_{C}^{f}(x),x)\leq D_{f}(y,x),  \ \ \ \forall x\in E, y\in C.$
\end{lemma}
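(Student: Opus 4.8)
The plan is to treat both parts as consequences of the variational (first-order optimality) characterization of a minimizer of a convex differentiable function over a convex set, together with the elementary three-point identity for the Bregman distance.

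For part (1), I would first observe that, for fixed $x$, expanding the definition \eqref{1} gives $D_{f}(y,x)=f(y)-f(x)-\langle \nabla f(x),y-x\rangle$, so that minimizing $y\mapsto D_{f}(y,x)$ over $C$ is the same as minimizing the auxiliary function $g(y):=f(y)-\langle \nabla f(x),y\rangle$ over $C$, the two objectives differing only by the constant $-f(x)+\langle\nabla f(x),x\rangle$. Since $f$ is convex and G\^{a}teaux differentiable, so is $g$, with $\nabla g(y)=\nabla f(y)-\nabla f(x)$. By definition $z=proj_{C}^{f}(x)$ is exactly the minimizer of $g$ over $C$ (total convexity of $f$ guarantees that this minimizer exists and is unique, so $z$ is well defined). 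I would then invoke the standard characterization: for a convex G\^{a}teaux differentiable $g$ and convex $C$, a point $z\in C$ minimizes $g$ over $C$ if and only if $\langle \nabla g(z),y-z\rangle\geq 0$ for all $y\in C$.

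To establish that characterization, for the forward direction I would use convexity of $C$: for $y\in C$ and $t\in(0,1]$ the point $z+t(y-z)\in C$, so $g(z+t(y-z))\geq g(z)$; dividing by $t$ and letting $t\searrow 0$ produces the right-hand directional derivative $\langle\nabla g(z),y-z\rangle\geq0$. For the converse I would use the subgradient inequality for the convex function $g$, namely $g(y)\geq g(z)+\langle\nabla g(z),y-z\rangle\geq g(z)$. Substituting $\nabla g(z)=\nabla f(z)-\nabla f(x)$ turns $\langle\nabla g(z),y-z\rangle\geq0$ into $\langle \nabla f(x)-\nabla f(z),y-z\rangle\leq0$ for all $y\in C$, which is exactly part (1).

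For part (2), I would first record the three-point identity, obtained by straightforward expansion of the three Bregman distances and cancellation of the $f(y),f(x),f(z)$ terms:
\begin{equation*}
D_{f}(y,x)-D_{f}(y,z)-D_{f}(z,x)=\langle \nabla f(z)-\nabla f(x),\,y-z\rangle .
\end{equation*}
Taking $z=proj_{C}^{f}(x)$ and applying part (1) --- which gives $\langle\nabla f(z)-\nabla f(x),y-z\rangle\geq0$ for every $y\in C$ --- the right-hand side is nonnegative, so $D_{f}(y,z)+D_{f}(z,x)\leq D_{f}(y,x)$, which is part (2). I expect the only real subtlety to be the forward direction of the optimality characterization, where one must justify that the difference quotient of $g$ along the feasible direction $y-z$ converges to $\langle\nabla g(z),y-z\rangle$; this is where G\^{a}teaux differentiability of $f$ (hence of $g$) and the convexity of $C$ are both used. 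Everything else --- the reduction to $g$, the three-point identity, and the derivation of part (2) from part (1) --- is routine algebra, and the role of total convexity is confined to guaranteeing that $proj_{C}^{f}(x)$ is a well-defined single point.
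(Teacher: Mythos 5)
Your proof is correct. Note that the paper itself gives no proof of this lemma --- it is quoted from Butnariu--Resmerita \cite{but} --- and your argument (reduction of $y\mapsto D_{f}(y,x)$ to the convex auxiliary function $g$, the first-order optimality characterization over a convex set for part (1), and the three-point identity combined with part (1) for part (2)) is precisely the standard proof from that source, with the role of total convexity correctly confined to well-definedness of the projection.
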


Let $f:E\to\mathbb{R}$ be a convex, Legendre and G\^{a}teaux differentiable function. Following \cite{alb} and \cite{cens}, we make use of the function $V_{f}:E\times E^{*}\to [0,\infty)$ associated with $f$, which is defined by
$$V_{f}(x,x^{*})=f(x)-\langle x^{*},x\rangle +f^{*}(x^{*}), \ \ \ \ \forall x\in E, x^{*}\in E^{*}.$$ 
Then $V_{f}$ is nonexpansive and $V_{f}(x,x^{*})=D_{f}(x,\nabla f^{*}(x^{*}))$ for all $x\in E$ and $x^{*}\in E^{*}$. Moreover, by the subdifferential inequality,
\begin{equation}\label{29}
V_{f}(x,x^{*})+\langle y^{*},\nabla f^{*}(x^{*})-x\rangle\leq V_{f}(x,x^{*}+y^{*})
\end{equation}
for all $x\in E$ and $x^{*},y^{*}\in E^{*}$ \cite{koh}. In addition, if $f:E\to (-\infty,+\infty]$ is a proper lower semicontinuous function, then $f^{*}:E^{*}\to(-\infty,+\infty]$ is a proper weak$^{*}$ lower semicontinuous and convex function (see \cite{mar}). Hence, $V_{f}$ is convex in the second variable. Thus, for all $z \in E$,
$$D_{f}\left(z,\nabla f^{*}\left(\sum_{i=1}^{N}t_{i}\nabla f(x_{i})\right)\right)\leq \sum_{i=1}^{N}t_{i}D_{f}(z,x_{i}),$$
where $\{x_{i}\}_{i=1}^{N}\subset E$ and $\{t_{i}\}_{i=1}^{N}\subset (0,1)$ with $\sum_{i=1}^{N}t_{i}=1$.
\begin{lemma}\label{222}\cite{mar}
Let $f:E\to(-\infty,+\infty]$ be a bounded, uniformly Fr\'{e}chet differentiable and totally convex function on bounded subsets of $E$. Assume that $\nabla f^{*}$ is bounded on bounded subsets of $\text{dom} f^{*}=E^{*}$ and let $C$ be a nonempty subset of $\text{int}(\text{dom} f)$. Let $\{T_{i} : i=1,2,\ldots,N\}$ be $N$ Bregman strongly nonexpansive mappings from $C$ into itself satisfying $\cap_{i=1}^{N}\widehat{F}(T_{i})\neq \emptyset.$ Let $T=T_{N}\circ T_{N-1}\circ\ldots \circ T_{1}$, then $T$ is Bregman strongly nonexpansive mapping and $\widehat{F}(T)=\cap_{i=1}^{N}\widehat{F}(T_{i})$.
\end{lemma}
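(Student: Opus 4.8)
The statement has two parts: that $T$ is Bregman strongly nonexpansive with respect to $W:=\bigcap_{i=1}^{N}\widehat{F}(T_i)$, and that $\widehat{F}(T)=W$. The engine for both is a telescoping decomposition along the partial compositions, combined with sequential consistency (Lemma~\ref{lem6}) and the norm-to-norm uniform continuity of $\nabla f$ on bounded sets (Lemma~\ref{lem7}). Writing $x^{(0)}=x$ and $x^{(k)}=T_{k}x^{(k-1)}$, so that $x^{(N)}=Tx$, the quasi-Bregman nonexpansiveness of each $T_{k}$ gives, for every $p\in W$, the monotone chain
\[
D_f(p,x)=D_f(p,x^{(0)})\ge D_f(p,x^{(1)})\ge\cdots\ge D_f(p,x^{(N)})=D_f(p,Tx),
\]
so that $T$ is already quasi-Bregman nonexpansive with respect to $W$.

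To verify the strong defining condition, I would take a bounded sequence $\{x_{n}\}$ and $p\in W$ with $D_f(p,x_{n})-D_f(p,Tx_{n})\to0$ and decompose
\[
D_f(p,x_{n})-D_f(p,Tx_{n})=\sum_{k=1}^{N}\bigl(D_f(p,x_{n}^{(k-1)})-D_f(p,x_{n}^{(k)})\bigr).
\]
By the chain above each summand is nonnegative, hence each tends to $0$. One first checks that every intermediate sequence $\{x_{n}^{(k)}\}$ is bounded: from $D_f(p,x_{n}^{(k)})\le D_f(p,x_{n})$ the numbers $D_f(p,x_{n}^{(k)})$ are bounded, and under the stated hypotheses (boundedness of $f$ on bounded sets forces strong coercivity of $f^{*}$, while $\nabla f^{*}$ is bounded on bounded sets) the sublevel sets of $D_f(p,\cdot)$ are bounded. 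Applying the strong nonexpansiveness of $T_{k}$ to the bounded sequence $\{x_{n}^{(k-1)}\}$ then yields $D_f(x_{n}^{(k-1)},x_{n}^{(k)})\to0$, and sequential consistency upgrades this to $\|x_{n}^{(k-1)}-x_{n}^{(k)}\|\to0$ for each $k$. Summing over $k$ gives $\|x_{n}-Tx_{n}\|\to0$, and since $f$ is convex and bounded (hence uniformly continuous) and $\nabla f$ is bounded on bounded sets, this returns $D_f(x_{n},Tx_{n})\to0$. Thus $T$ is Bregman strongly nonexpansive with respect to $W$.

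For the inclusion $\widehat{F}(T)\subseteq W$, let $u\in\widehat{F}(T)$ with $x_{n}\rightharpoonup u$ and $\|x_{n}-Tx_{n}\|\to0$, and fix any $p\in W$. Using the elementary two-point identity
\[
D_f(p,x_{n})-D_f(p,Tx_{n})=D_f(Tx_{n},x_{n})+\langle\nabla f(x_{n})-\nabla f(Tx_{n}),\,Tx_{n}-p\rangle,
\]
the right-hand side tends to $0$ because $\|x_{n}-Tx_{n}\|\to0$, $\nabla f$ is uniformly continuous on bounded sets (Lemma~\ref{lem7}), and all the sequences involved are bounded. The same telescoping as above then forces $\|x_{n}^{(k-1)}-x_{n}^{(k)}\|\to0$ for every $k$; consequently the shifted sequences $w_{n}^{(k)}:=x_{n}^{(k-1)}$ obey $\|w_{n}^{(k)}-x_{n}\|\to0$, whence $w_{n}^{(k)}\rightharpoonup u$, while $\|w_{n}^{(k)}-T_{k}w_{n}^{(k)}\|\to0$. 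This exhibits $u$ as an asymptotic fixed point of each $T_{k}$, so $u\in W$.

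The reverse inclusion $W\subseteq\widehat{F}(T)$ is where the main obstacle lies. Given $p\in W$ one has, for each $i$, a weakly convergent sequence witnessing $p\in\widehat{F}(T_{i})$, but these witnessing sequences need not coincide, so there is no immediate single sequence $x_{n}\rightharpoonup p$ with $\|x_{n}-Tx_{n}\|\to0$, and one cannot simply feed one witness into the next operator. My plan is to circumvent this by first proving that a common asymptotic fixed point is in fact a genuine common fixed point, that is $W=\bigcap_{i=1}^{N}F(T_{i})$; once this is in hand, every $p\in W$ satisfies $Tp=T_{N}\cdots T_{1}p=p$, so the constant sequence witnesses $p\in F(T)\subseteq\widehat{F}(T)$. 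Establishing $\widehat{F}(T_{i})=F(T_{i})$ for a Bregman strongly nonexpansive $T_{i}$ is the delicate point: from a weakly convergent almost-fixed sequence one must pass to the limit inside $T_{i}$, which requires a demiclosedness-type property of $I-T_{i}$ and hence some weak-continuity behaviour of $\nabla f$ beyond the norm-to-norm uniform continuity of Lemma~\ref{lem7}. I would therefore isolate this demiclosedness principle as a separate lemma, as in the cited references, and feed it into the argument to close the proof.
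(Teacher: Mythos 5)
The paper does not actually prove this lemma; it is quoted verbatim from \cite{mar}, so there is no in-paper argument to compare against. Judged on its own terms, your proposal gets the bulk of the lemma right: the telescoping decomposition $x^{(k)}=T_{k}x^{(k-1)}$, the observation that each increment $D_f(p,x_n^{(k-1)})-D_f(p,x_n^{(k)})$ is nonnegative and hence tends to zero individually, the use of sequential consistency (Lemma~\ref{lem6}) together with uniform continuity of $\nabla f$ (Lemma~\ref{lem7}) to pass back and forth between $D_f(x_n,Tx_n)\to0$ and $\|x_n-Tx_n\|\to0$, and the two-point identity used to show $\widehat{F}(T)\subseteq\bigcap_{i}\widehat{F}(T_i)$ are all correct and are exactly the mechanism of the source's proof. (Your boundedness claim for the intermediate sequences is stated loosely --- Lemma~\ref{taz} controls boundedness in the first argument of $D_f$, not the second --- but the hypothesis that $\nabla f^{*}$ is bounded on bounded subsets of $E^{*}$ is present precisely to make that step work, so this is a presentational rather than a substantive defect.)

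The genuine gap is the inclusion $\bigcap_{i=1}^{N}\widehat{F}(T_i)\subseteq\widehat{F}(T)$, which you correctly flag but do not close, and your proposed repair would not succeed as stated: the identity $\widehat{F}(T_i)=F(T_i)$ is \emph{not} a consequence of Bregman strong nonexpansiveness plus the smoothness/convexity hypotheses on $f$; in the literature (and in Theorem~\ref{mt} of this very paper) it is imposed as an additional standing assumption, and the demiclosedness principle you would need is only available for special subclasses such as BFNE operators. The honest diagnosis is that the lemma as transcribed here has dropped the hypothesis $F(T_i)=\widehat{F}(T_i)$ that appears in \cite{mar}; with that hypothesis restored, every $p\in\bigcap_i\widehat{F}(T_i)=\bigcap_i F(T_i)$ satisfies $Tp=p$, the constant sequence witnesses $p\in\widehat{F}(T)$, and your argument is complete. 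Without it, your proof establishes that $T$ is Bregman strongly nonexpansive with respect to $\bigcap_i\widehat{F}(T_i)$ and that $\widehat{F}(T)\subseteq\bigcap_i\widehat{F}(T_i)$, but not the reverse inclusion, so the equality $\widehat{F}(T)=\bigcap_i\widehat{F}(T_i)$ remains unproved. You should either add the missing hypothesis (harmless here, since the main theorem assumes it anyway) or state the weaker conclusion.
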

\begin{lemma}\label{333}\cite{rei3}
Let $C$ be a nonempty, closed and convex subset of $\text{int}(\text{dom} f)$ and $T:C\to C$ be a quasi-Bregman nonexpansive mappings with respect to $f$. Then $F(T)$ is closed and convex.
\end{lemma}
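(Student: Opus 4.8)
The plan is to prove closedness and convexity of $F(T)$ separately. Throughout I would use that $f$ is Legendre, hence strictly convex on $\text{int}(\text{dom} f)$, so that $D_f(u,v)=0$ implies $u=v$; that $f$ is continuous on $\text{int}(\text{dom} f)\supseteq C$; and the elementary three-point identity
$$D_f(x,z)+D_f(z,y)-D_f(x,y)=\langle \nabla f(y)-\nabla f(z),x-z\rangle,$$
which is immediate from the definition (\ref{1}) of $D_f$ by expanding all three terms and cancelling the $f$-values.

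To show that $F(T)$ is closed, I would take $\{p_n\}\subset F(T)$ with $p_n\to p$; closedness of $C$ gives $p\in C$. Since each $p_n$ is a fixed point, the quasi-Bregman nonexpansive inequality applied with $p_n\in F(T)$ and the argument $p$ yields $D_f(p_n,Tp)\le D_f(p_n,p)$. Writing $D_f(p_n,p)=f(p_n)-f(p)-\langle \nabla f(p),p_n-p\rangle$ and using continuity of $f$ at $p$ shows the right-hand side tends to $0$. On the left, continuity of $f$ together with the fact that $\nabla f(Tp)$ is a fixed functional gives $D_f(p_n,Tp)\to D_f(p,Tp)$. As $D_f\ge0$, passing to the limit forces $D_f(p,Tp)=0$, and strict convexity then gives $Tp=p$, i.e.\ $p\in F(T)$.

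To show that $F(T)$ is convex, I would fix $p,q\in F(T)$, $t\in(0,1)$, and set $z=tp+(1-t)q$, which lies in $C$ by convexity of $C$. Applying the three-point identity with middle point $z$ and outer point $Tz$ gives $D_f(p,Tz)=D_f(p,z)+D_f(z,Tz)-\langle \nabla f(Tz)-\nabla f(z),p-z\rangle$, and likewise with $q$ in place of $p$. The quasi-Bregman nonexpansive inequalities $D_f(p,Tz)\le D_f(p,z)$ and $D_f(q,Tz)\le D_f(q,z)$ then reduce these to
$$D_f(z,Tz)\le \langle \nabla f(Tz)-\nabla f(z),p-z\rangle, \qquad D_f(z,Tz)\le \langle \nabla f(Tz)-\nabla f(z),q-z\rangle.$$
Forming the convex combination of these two bounds with weights $t$ and $1-t$ collapses the right-hand side to $\langle \nabla f(Tz)-\nabla f(z),\,tp+(1-t)q-z\rangle=0$, so $D_f(z,Tz)\le0$, hence $D_f(z,Tz)=0$ and $Tz=z$; thus $z\in F(T)$.

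The delicate step is the convexity argument: the whole point is to recognize that the three-point identity converts the two nonexpansive inequalities into upper bounds for the \emph{single} quantity $D_f(z,Tz)$, and that these bounds are affine in the base points $p,q$, so that their $t$-convex combination cancels the linear term exactly because $z=tp+(1-t)q$. The closedness part, by contrast, is routine continuity bookkeeping, and neither part needs more than strict convexity of $f$ to pass from $D_f=0$ to equality of points.
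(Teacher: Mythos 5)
Your argument is correct. Note that the paper itself gives no proof of this lemma --- it is quoted from Reich and Sabach \cite{rei3} --- so there is nothing internal to compare against; your proof is essentially the standard one from that reference. Both halves check out: the three-point identity is verified correctly, the closedness part only uses continuity of $f$ on $\text{int}(\text{dom}\, f)$ (which the paper grants for G\^{a}teaux differentiable convex functions) together with $D_f\ge 0$ and strict convexity of the Legendre function $f$, and the convexity part correctly exploits that the two upper bounds for $D_f(z,Tz)$ are affine in the outer points $p,q$, so their $t$-weighted combination vanishes at $z=tp+(1-t)q$.
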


For solving the mixed equilibrium problem, let us give the following assumptions for the bifunction $\Theta$ on the set $C$:

($A_{1}$) $\Theta (x,x)=0$ for all $x\in C$;

($A_{2}$) $\Theta$ is monotone, i.e., $\Theta (x,y)+\Theta (y,x)\leq0$ for any $x,y\in C$;

($A_{3}$) for each $y\in C, x\mapsto \Theta (x,y)$ is weakly upper semicontinuous;

($A_{4}$) for each $x\in C, y\mapsto \Theta (x,y)$ is convex;

($A_{5}$) for each $x\in C, y\mapsto \Theta (x,y)$ is lower semicontinuous (see \cite{pen}).

\begin{definition}
Let $C$ be a nonempty, closed and convex subsets of a real reflexive Banach space and let $\varphi$ be a lower semicontinuous and convex functional from $C$ to $\mathbb{R}$. Let $\Theta: C\times C\to\mathbb{R}$ be a bifunctional satisfying ($A_{1}$)-($A_{5}$). The \textit{mixed resolvent} of $\Theta$ is the operator $Res_{\Theta,\varphi}^{f}:E\to 2^{C}$ 
\begin{equation}
Res_{\Theta,\varphi}^{f}(x)=\{z\in C: \Theta (z,y)+\varphi(y)+\langle \nabla f(z)-\nabla f(x),y-z\rangle \geq \varphi(z), \ \ \forall y\in C\}.
\end{equation}
\end{definition}

In the following two lemmas the idea of proofs are the same as in \cite{rei}, but for reader's convenience we provide their proofs.
\begin{lemma}
Let $f:E\to(-\infty,+\infty]$ be a coercive and G\^{a}teaux differentiable function. Let $C$ be a closed and convex subset of $E$. Assume that $\varphi:C\to\mathbb{R}$ be a lower semicontinuous and convex functional and the bifunctional $\Theta:C\times C\to\mathbb{R}$ satisfies conditions ($A_{1}$)-($A_{5}$), then $\text{dom}(Res_{\Theta,\varphi}^{f})=E$.
\end{lemma}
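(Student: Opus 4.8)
The plan is to fix an arbitrary $x\in E$ and to produce a point $z\in C$ lying in $Res_{\Theta,\varphi}^{f}(x)$, i.e. a solution of the auxiliary problem governed by the bifunction
\[
F(z,y):=\Theta(z,y)+\varphi(y)-\varphi(z)+\langle \nabla f(z)-\nabla f(x),y-z\rangle .
\]
Thus I want $z\in C$ with $F(z,y)\geq 0$ for every $y\in C$, and I would obtain such a $z$ from the Fan--KKM lemma together with the reflexivity of $E$ and the coercivity of $f$. First I would record the structural properties of $F$ that mirror those of $\Theta$: from ($A_{1}$) one gets $F(z,z)=0$; combining ($A_{2}$) with the monotonicity of the gradient of the convex function $f$ yields $F(z,y)+F(y,z)\leq 0$ (the terms in $\varphi$ and in $\nabla f(x)$ cancel, leaving $\Theta(z,y)+\Theta(y,z)\leq0$ and $\langle\nabla f(z)-\nabla f(y),y-z\rangle\leq0$), so $F$ is monotone; ($A_{4}$) and the convexity of $\varphi$ make $y\mapsto F(z,y)$ convex and lower semicontinuous (the coupling term being affine in $y$); and for fixed $y$ the map $z\mapsto F(y,z)$ is likewise convex and lower semicontinuous, hence weakly lower semicontinuous.

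For each $y\in C$ set $S(y)=\{z\in C:F(z,y)\geq 0\}$ and $\widetilde S(y)=\{z\in C:F(y,z)\leq 0\}$. Monotonicity gives $S(y)\subset\widetilde S(y)$, and convexity of $F(z,\cdot)$ shows $\{S(y)\}_{y\in C}$ is a KKM family: if $z=\sum\lambda_{i}y_{i}$ avoided every $S(y_{i})$ then $0=F(z,z)\leq\sum\lambda_{i}F(z,y_{i})<0$, a contradiction; hence $\{\widetilde S(y)\}_{y\in C}$ is KKM as well. The point of passing to $\widetilde S$ is that, unlike $S$, each $\widetilde S(y)$ is convex and weakly closed, since $z\mapsto F(y,z)$ is convex and lower semicontinuous; this is exactly what sidesteps the difficulty that $z\mapsto\langle\nabla f(z),y-z\rangle$ is \emph{not} weakly upper semicontinuous.

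Next I would solve the problem on balls and then remove the restriction. Fix $y_{0}\in C$; for $r\geq\|y_{0}\|$ the set $C_{r}:=C\cap\{z:\|z\|\leq r\}$ is bounded, closed and convex, hence weakly compact by reflexivity, so the sets $\widetilde S(y)\cap C_{r}$ are weakly compact and Fan's KKM lemma produces $z_{r}\in C_{r}$ with $F(y,z_{r})\leq0$ for all $y\in C_{r}$. A Minty-type reduction upgrades this: along the segments $y_{t}=z_{r}+t(y-z_{r})\in C_{r}$ one has $0=F(y_{t},y_{t})\leq(1-t)F(y_{t},z_{r})+tF(y_{t},y)$, whence $F(y_{t},y)\geq0$, and letting $t\downarrow0$ the weak upper semicontinuity from ($A_{3}$), the lower semicontinuity of $\varphi$ and the norm-to-weak$^{*}$ continuity of $\nabla f$ give $F(z_{r},y)\geq0$ for all $y\in C_{r}$. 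Finally I invoke coercivity: testing $F(z_{r},y_{0})\geq0$ and using $\langle\nabla f(z_{r}),y_{0}-z_{r}\rangle\leq f(y_{0})-f(z_{r})$ (that is, $D_{f}(y_{0},z_{r})\geq0$) together with affine minorants of $\varphi$ and of $\Theta(y_{0},\cdot)$ bounds $f(z_{r})$ and forces $\{z_{r}\}$ to stay bounded; choosing $r$ past this bound makes $\|z_{r}\|<r$, and a last segment argument (for $y\in C$ the point $z_{r}+t(y-z_{r})$ lies in $C_{r}$ for small $t$, and convexity of $F(z_{r},\cdot)$ promotes $F(z_{r},y_{t})\geq0$ to $F(z_{r},y)\geq0$) yields $z_{r}\in Res_{\Theta,\varphi}^{f}(x)$, so $x\in\mathrm{dom}(Res_{\Theta,\varphi}^{f})$.

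I expect the genuine obstacle to be this last, quantitative step of converting $F(z_{r},y)\geq0$ on $C_{r}$ into the same inequality on all of $C$. Everything up to the existence of $z_{r}$ is soft (KKM plus reflexivity, with the switch to $\widetilde S$ absorbing the non-weak-continuity of the coupling term), but the extension forces a uniform bound on the approximants, and it is precisely here that the coercivity of $f$ must dominate the at-most-linear growth coming from the perturbation $\langle\nabla f(x),\cdot\rangle$ and from the affine minorants of $\varphi$ and $\Theta$.
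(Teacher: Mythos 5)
Your proof takes a genuinely different route from the paper's. The paper runs no KKM argument at all: it applies the Blum--Oettli existence theorem (\cite[Theorem 1]{blu}) as a black box to the perturbed bifunction $\Theta(z,y)+\varphi(y)-\varphi(z)+f(y)-f(z)-\langle x^{*},y-z\rangle$, whose regularizing term involves the \emph{values} $f(y)-f(z)$ rather than the gradient, and then spends the rest of the proof on a linearization step: substitute $y=t\hat{x}+(1-t)\hat{y}$, use the subgradient inequality to replace $f(t\hat{x}+(1-t)\hat{y})-f(\hat{x})$ by $\langle \nabla f(t\hat{x}+(1-t)\hat{y}),(1-t)(\hat{y}-\hat{x})\rangle$, divide by $1-t$, and let $t\to 1^{-}$ using the norm-to-weak$^{*}$ continuity of $\nabla f$. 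You instead put $\langle\nabla f(z)-\nabla f(x),y-z\rangle$ into the auxiliary bifunction from the outset and re-derive the existence statement yourself (KKM on balls in the weak topology, the Minty switch to $\widetilde S(y)$ to absorb the failure of weak upper semicontinuity of $z\mapsto\langle\nabla f(z),y-z\rangle$, then removal of the ball). Your version is self-contained and makes explicit where each of ($A_{1}$)--($A_{5}$) enters; the paper's is much shorter because the compactness machinery is hidden in the citation, but it pays for that with the $t\to1^{-}$ limit, which your formulation avoids entirely. The Minty passage and the two final segment arguments are correct as written; just take the limit $t\downarrow 0$ along sequences so that weak$^{*}$ convergence of $\nabla f(y_{t})$ gives norm-boundedness via Banach--Steinhaus.

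The one step to push back on is the coercivity bound --- and the paper's proof has exactly the same soft spot. Your estimate yields $f(z_{r})\leq A+B\|z_{r}\|$ with $B$ built from $\|\nabla f(x)\|_{*}$ and the affine minorants of $\varphi$ and $\Theta(y_{0},\cdot)$. Coercivity as this paper defines it (bounded sublevel sets, equivalently $\lim_{\|z\|\to+\infty}f(z)=+\infty$) does not bound $\{z_{r}\}$ from such an estimate: $f(z)=\|z\|$ is coercive yet satisfies $f(z)\leq A+B\|z\|$ for every $z$ once $B\geq1$. What the argument actually needs is superlinear growth, i.e.\ strong coercivity $f(z)/\|z\|\to+\infty$. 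The paper makes the same silent leap when it asserts that coercivity of $f$ gives $\lim_{\|x-y\|\to+\infty}h(x,y)/\|x-y\|=+\infty$ for $h(x,y)=f(y)-f(x)-\langle x^{*},y-x\rangle$; that is precisely a superlinear growth condition and does not follow from bounded sublevel sets. So either strengthen the hypothesis to strong coercivity (which is what both your argument and the paper's really use) or supply a separate reason why the approximants $z_{r}$ stay bounded.
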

\begin{proof}
Since $f$ is a coercive function, the function $h:E\times E\to (-\infty,+\infty]$ defined by
$$h(x,y)=f(y)-f(x)-\langle x^{*},y-x\rangle,$$
satisfies the following for all $x^{*}\in E^{*}$ and $y\in C$ 
$$\lim_{\|x-y\|\to+\infty}\frac{h(x,y)}{\|x-y\|}=+\infty.$$
Then from \cite[Theorem 1]{blu}, there exists $\hat{x}\in C$ such that
\begin{equation*}
\Theta(\hat{x},y)+\varphi(y)-\varphi(\hat{x})+f(y)-f(\hat{x})-\langle x^{*},y-\hat{x}\rangle\geq0,
\end{equation*}
for any $y\in C$. So, we have
\begin{equation}\label{7}
\Theta(\hat{x},y)+\varphi(y)+f(y)-f(\hat{x})-\langle x^{*},y-\hat{x}\rangle\geq\varphi(\hat{x}).
\end{equation}
We know that inequality (\ref{7}) holds for $y=t\hat{x}+(1-t)\hat{y}$ where $\hat{y}\in C$ and $t\in (0,1)$. Therefore,
\begin{eqnarray*}
\Theta(\hat{x},t\hat{x}+(1-t)\hat{y})&+&\varphi(t\hat{x}+(1-t)\hat{y})+f(t\hat{x}+(1-t)\hat{y})-f(\hat{x})\\
&&-\langle x^{*},t\hat{x}+(1-t)\hat{y}-\hat{x}\rangle\\
&&\geq\varphi(\hat{x})
\end{eqnarray*}
for all $\hat{y}\in C$. By convexity of $\varphi$ we have
\begin{eqnarray}
\Theta(\hat{x},t\hat{x}+(1-t)\hat{y})&+&(1-t)\varphi(\hat{y})+f(t\hat{x}+(1-t)\hat{y})-f(\hat{x})\nonumber\\
&&-\langle x^{*},t\hat{x}+(1-t)\hat{y}-\hat{x}\rangle\nonumber\\
&&\geq(1-t)\varphi(\hat{x}).\label{eq4}
\end{eqnarray}
Since 
$$f(t\hat{x}+(1-t)\hat{y})-f(\hat{x})\leq \langle \nabla f(t\hat{x}+(1-t)\hat{y}),t\hat{x}+(1-t)\hat{y}-\hat{x}\rangle,$$
we have from (\ref{eq4}) and ($A_{5}$) that
\begin{eqnarray*}
t\Theta (\hat{x},\hat{x})+(1-t)\Theta (\hat{x},\hat{y})+(1-t)\varphi(\hat{y})&+&\langle \nabla f(t\hat{x}+(1-t)\hat{y}),t\hat{x}+(1-t)\hat{y}-\hat{x}\rangle\\
&& -\langle x^{*},t\hat{x}+(1-t)\hat{y}-\hat{x}\rangle\geq (1-t)\varphi(\hat{x})
\end{eqnarray*}
for all $\hat{y}\in C$. From ($A_{1}$) we have
\begin{eqnarray*}
(1-t)\Theta (\hat{x},\hat{y})+(1-t)\varphi(\hat{y})&+&\langle \nabla f(t\hat{x}+(1-t)\hat{y}),(1-t)(\hat{y}-\hat{x})\rangle\\
&& -\langle x^{*},(1-t)(\hat{y}-\hat{x})\rangle\geq (1-t)\varphi(\hat{x}).
\end{eqnarray*}
Equivalently
\begin{eqnarray*}
(1-t)[\Theta (\hat{x},\hat{y})+\varphi(\hat{y})&+&\langle \nabla f(t\hat{x}+(1-t)\hat{y}),\hat{y}-\hat{x}\rangle\\
&& -\langle x^{*},\hat{y}-\hat{x}\rangle]\geq (1-t)\varphi(\hat{x}).
\end{eqnarray*}
So, we have
$$\Theta (\hat{x},\hat{y})+\varphi(\hat{y})+\langle \nabla f(t\hat{x}+(1-t)\hat{y}),\hat{y}-\hat{x}\rangle-\langle x^{*},\hat{y}-\hat{x}\rangle\geq\varphi(\hat{x}),$$
for all $\hat{y}\in C$. Since $f$ is G\^{a}teaux differentiable function, it follows that $\nabla f$ is norm-to-weak$^{*}$ continuous (see \cite[Proposition 2.8]{phe}. Hence, letting $t\to1^{-1}$ we then get
$$\Theta (\hat{x},\hat{y})+\varphi(\hat{y})+\langle \nabla f(\hat{x}),\hat{y}-\hat{x}\rangle-\langle x^{*},\hat{y}-\hat{x}\rangle\geq\varphi(\hat{x}).$$
By taking $x^{*}=\nabla f(x)$ we obtain $\hat{x}\in C$ such that
$$\Theta (\hat{x},\hat{y})+\varphi(\hat{y})+\langle \nabla f(\hat{x})-\nabla f(x),\hat{y}-\hat{x}\rangle\geq\varphi(\hat{x}),$$
for all $\hat{y}\in C$, i.e., $\hat{x}\in Res_{\Theta,\varphi}^{f}(x)$. So, $\text{dom}(Res_{\Theta,\varphi}^{f})=E$.

\end{proof}

\begin{lemma}\label{nv}
Let $f:E\to(-\infty,+\infty]$ be a Legendre function. Let $C$ be a closed and convex subset of $E$. If the bifunction $\Theta: C\times C\to\mathbb{R}$ satisfies conditions ($A_{1})$-($A_{5}$), then 
\begin{enumerate}

\item $Res_{\Theta,\varphi}^{f}$ is single-valued;

\item $Res_{\Theta,\varphi}^{f}$ is a BFNE operator;

\item $F\left(Res_{\Theta,\varphi}^{f}\right)=MEP(\Theta)$;

\item $MEP(\Theta)$ is closed and convex;

\item $D_{f}\left(p, Res_{\Theta,\varphi}^{f}(x)\right)+D_{f}\left(Res_{\Theta,\varphi}^{f}(x),x\right)\leq D_{f}(p,x), \ \forall p\in F\left(Res_{\Theta,\varphi}^{f}\right), x\in E$.
\end{enumerate}
\end{lemma}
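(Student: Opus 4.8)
The plan is to run everything off the defining variational inequality of the resolvent together with the monotonicity hypothesis ($A_{2}$), using one recurring device: given two instances of the inequality
$$\Theta(z,y)+\varphi(y)+\langle \nabla f(z)-\nabla f(x),y-z\rangle\geq \varphi(z),\quad \forall y\in C,$$
I substitute the ``center'' of each as the test variable $y$ in the other, add the two, and cancel the bifunction terms via ($A_{2}$), which gives $\Theta(\cdot,\cdot)+\Theta(\cdot,\cdot)\leq 0$. What survives is always a monotonicity statement about $\nabla f$, and each of (1)--(5) is read off from it. The strict convexity of the Legendre $f$ on $\mathrm{int}(\mathrm{dom}\,f)$ supplies strict monotonicity of $\nabla f$ where needed.

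For (1), take $z_{1},z_{2}\in Res_{\Theta,\varphi}^{f}(x)$; writing the inequality for $z_{1}$ at $y=z_{2}$ and for $z_{2}$ at $y=z_{1}$, adding, and cancelling the $\varphi$-terms and (by ($A_{2}$)) the $\Theta$-terms yields $\langle \nabla f(z_{1})-\nabla f(z_{2}),z_{1}-z_{2}\rangle\leq 0$. Strict monotonicity of $\nabla f$ forces $z_{1}=z_{2}$, so the resolvent is single-valued. For (2), I apply the same device to $u:=Res_{\Theta,\varphi}^{f}(x)$ and $v:=Res_{\Theta,\varphi}^{f}(y)$ from two source points: substituting $v$ into the inequality for $u$ and $u$ into the inequality for $v$, adding, and using ($A_{2}$) gives, after a short rearrangement, exactly $\langle \nabla f(u)-\nabla f(v),u-v\rangle\leq\langle \nabla f(x)-\nabla f(y),u-v\rangle$, the defining inequality of a BFNE operator. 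For (3), $x\in F(Res_{\Theta,\varphi}^{f})$ means $Res_{\Theta,\varphi}^{f}(x)=x$, and feeding $z=x$ into the definition makes $\langle\nabla f(x)-\nabla f(x),y-x\rangle$ vanish, leaving precisely $\Theta(x,y)+\varphi(y)\geq\varphi(x)$ for all $y\in C$, i.e. $x\in MEP(\Theta)$; the converse is identical.

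The crux is (5), which also drives (4). Fix $x\in E$, set $u:=Res_{\Theta,\varphi}^{f}(x)$, and let $p\in F(Res_{\Theta,\varphi}^{f})=MEP(\Theta)$. Substituting the test point $y=p$ into the resolvent inequality for $u$, and combining it with the membership $p\in MEP(\Theta)$ (namely $\Theta(p,u)+\varphi(u)\geq\varphi(p)$) and with monotonicity $\Theta(u,p)+\Theta(p,u)\leq 0$, the $\varphi$- and $\Theta$-terms cancel to leave the sign condition $\langle \nabla f(u)-\nabla f(x),p-u\rangle\geq 0$. I then invoke the three-point identity for the Bregman distance,
$$D_{f}(p,x)-D_{f}(p,u)-D_{f}(u,x)=\langle \nabla f(u)-\nabla f(x),p-u\rangle,$$
which is immediate from the definition (\ref{1}); nonnegativity of the right-hand side gives $D_{f}(p,u)+D_{f}(u,x)\leq D_{f}(p,x)$, i.e. (5). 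Finally (4) follows formally: dropping the nonnegative term $D_{f}(u,x)$ in (5) shows $Res_{\Theta,\varphi}^{f}$ is quasi-Bregman nonexpansive, so by Lemma \ref{333} its fixed-point set is closed and convex, and by (3) that set equals $MEP(\Theta)$.

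The only genuinely delicate point is the bookkeeping in (5): one must orient the substitution $y=p$ and apply ($A_{2}$) in the correct direction so that the inner product emerges with sign $\geq 0$ rather than $\leq 0$, since the three-point identity then hands over the result with no further work. The remainder is the same monotonicity-cancellation mechanism reused, together with strict convexity of the Legendre $f$ to upgrade monotonicity to strict monotonicity for single-valuedness.
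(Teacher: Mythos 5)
Your proposal is correct, and for parts (1)--(3) it coincides with the paper's argument step for step (the same ``cross-substitute, add, cancel via ($A_{2}$)'' device, with strict monotonicity of $\nabla f$ closing part (1)). Where you genuinely diverge is in (4) and (5). The paper proves (5) by invoking the Bregman-distance characterization (\ref{5}) of BFNE operators with $y=p$ and cancelling terms, and it gets (4) by citing an external result (\cite[Lemma 1.3.1]{rei3}) that BFNE operators have closed convex fixed-point sets. You instead prove (5) directly from the defining inequality of the resolvent at the test point $y=p$, combined with $p\in MEP(\Theta)$ and ($A_{2}$) to isolate $\langle\nabla f(u)-\nabla f(x),p-u\rangle\geq 0$, and then apply the three-point identity for $D_{f}$; you then deduce (4) by dropping the nonnegative term $D_{f}(u,x)$ to get quasi-Bregman nonexpansiveness and appealing to Lemma \ref{333}. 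Your route is slightly more self-contained: it does not rely on the equivalence between the inner-product form of BFNE (which is what part (2) actually establishes) and the Bregman-distance form (\ref{5}), an equivalence the paper uses without proof when it passes from (2) to (5). The cost is that your (4) formally depends on (5) and needs the trivial remark that the empty set is closed and convex in case $MEP(\Theta)=\emptyset$ (the definition of quasi-Bregman nonexpansive presupposes a nonempty fixed-point set); the paper's citation-based route avoids that ordering. Both arguments are sound, and your computation of the three-point identity and of the sign of the inner product is correct.
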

\begin{proof}
(1) Let $z_{1}, z_{2}\in Res_{\Theta,\varphi}^{f}(x)$ then by definition of the resolvent we have

$$\Theta(z_{1},z_{2})+\varphi (z_{2}) +\langle \nabla f(z_{1})-\nabla f(x),z_{2}-z_{1}\rangle\geq \varphi(z_{1})$$
and
$$ \Theta (z_{2},z_{1})+\varphi(z_{1})+\langle \nabla f(z_{2}-\nabla f(x),z_{1}-z_{2}\rangle \geq \varphi(z_{2}).$$
Adding these two inequalities, we obtain
$$\Theta (z_{1},z_{2})+\Theta(z_{2},z_{1})+\varphi(z_{1})+\varphi(z_{2})+\langle \nabla f(z_{2})-\nabla f(z_{1}),z_{1}-z_{2}\rangle\geq \varphi(z_{1})+\varphi(z_{2}).$$
So, $$\Theta (z_{1},z_{2})+\Theta(z_{2},z_{1})+\langle \nabla f(z_{2})-\nabla f(z_{1}),z_{1}-z_{2}\rangle\geq 0.$$
By ($A_{2}$), we have
$$\langle \nabla f(z_{2})-\nabla f(z_{1}),z_{1}-z_{2}\rangle\geq0.$$
Since $f$ is Legendre then it is strictly convex. So, $\nabla f$ is strictly monotone and hence $z_{1}=z_{2}$. It follows that $Res_{\Theta,\varphi}^{f}$ is single-valued. \\
\\
(2) Let $x,y\in E$, we then have
\begin{eqnarray}
\Theta(Res_{\Theta,\varphi}^{f}(x),Res_{\Theta,\varphi}^{f}(y))&+&\varphi(Res_{\Theta,\varphi}^{f}(y)) \nonumber\\
&&+\langle \nabla f(Res_{\Theta,\varphi}^{f}(x))-\nabla f(x),Res_{\Theta,\varphi}^{f}(y)-Res_{\Theta,\varphi}^{f}(x)\rangle\nonumber\\
&&\geq \varphi(Res_{\Theta,\varphi}^{f}(x))\label{eq1}
\end{eqnarray}
and
\begin{eqnarray}
\Theta(Res_{\Theta,\varphi}^{f}(y),Res_{\Theta,\varphi}^{f}(x))&+&\varphi(Res_{\Theta,\varphi}^{f}(x))\nonumber\\
&&+\langle \nabla f(Res_{\Theta,\varphi}^{f}(y)-\nabla f(y),Res_{\Theta,\varphi}^{f}(x)-Res_{\Theta,\varphi}^{f}(y)\rangle\nonumber\\
&&\geq \varphi(Res_{\Theta,\varphi}^{f}(y)).\label{eq2}
\end{eqnarray}
Adding the inequalities (\ref{eq1}) and (\ref{eq2}), we have
\begin{eqnarray*}
&&\Theta (Res_{\Theta,\varphi}^{f}(x),Res_{\Theta,\varphi}^{f}(y))+\Theta(Res_{\Theta,\varphi}^{f}(y),Res_{\Theta,\varphi}^{f}(x))\\
&&+\langle \nabla f(Res_{\Theta,\varphi}^{f}(x))-\nabla f(x)+\nabla f(y)-\nabla f(Res_{\Theta,\varphi}^{f}(y)),Res_{\Theta,\varphi}^{f}(y)-Res_{\Theta,\varphi}^{f}(x)\rangle\geq0.
\end{eqnarray*}
By ($A_{2}$), we obtain
\begin{eqnarray*}
&&\langle \nabla f(Res_{\Theta,\varphi}^{f}(x))-\nabla f(Res_{\Theta,\varphi}^{f}(y)),Res_{\Theta,\varphi}^{f}(x)-Res_{\Theta,\varphi}^{f}(y)\rangle\\
&&\leq\langle \nabla f(x)-\nabla f(y),Res_{\Theta,\varphi}^{f}(x)-Res_{\Theta,\varphi}^{f}(y)\rangle.
\end{eqnarray*}
It means $Res_{\Theta,\varphi}^{f}$ is BFNE operator.\\
\\
(3) 
\begin{eqnarray*}
x\in F(Res_{\Theta,\varphi}^{f})&\Leftrightarrow& x=Res_{\Theta,\varphi}^{f}(x)\\
&\Leftrightarrow& \Theta (x,y)+\varphi(y)+\langle \nabla f(x)-\nabla f(x),y-x\rangle\geq \varphi(x), \ \ \forall y\in C\\
&\Leftrightarrow& \Theta (x,y)+\varphi(y)\geq \varphi(x), \ \ \forall y\in C\\
&\Leftrightarrow& x\in MEP(\Theta).\\
\end{eqnarray*}
\\
(4) Since $Res_{\Theta,\varphi}^{f}$ is a BFNE operator, it follows from \cite[Lemma 1.3.1]{rei3} that $F(Res_{\Theta,\varphi}^{f})$ is a closed and convex subset of $C$. So, from (3) we have $MEP(\Theta)=F(Res_{\Theta,\varphi}^{f})$ is a closed and convex subset of C.\\
\\
(5) Since $Res_{\Theta,\varphi}^{f}$ is a BFNE operator, we have from (\ref{5}) that for all $x,y\in E$
\begin{eqnarray*}
&&D_{f}(Res_{\Theta,\varphi}^{f}(x),Res_{\Theta,\varphi}^{f}(y))+D_{f}(Res_{\Theta,\varphi}^{f}(y),Res_{\Theta,\varphi}^{f}(x))\\
&&\leq D_{f}(Res_{\Theta,\varphi}^{f}(x),y)-D_{f}(Res_{\Theta,\varphi}^{f}(x),x)+D_{f}(Res_{\Theta,\varphi}^{f}(y),x)-D_{f}(Res_{\Theta,\varphi}^{f}(y),y).
\end{eqnarray*}
Let $y=p\in F(Res_{\Theta,\varphi}^{f})$, we then get
\begin{eqnarray*}
&&D_{f}(Res_{\Theta,\varphi}^{f}(x),p)+D_{f}(p,Res_{\Theta,\varphi}^{f}(x))\\
&&\leq D_{f}(Res_{\Theta,\varphi}^{f}(x),p)-D_{f}(Res_{\Theta,\varphi}^{f}(x),x)+D_{f}(p,x)-D_{f}(p,p).
\end{eqnarray*}
Hence, 
$$D_{f}(p,Res_{\Theta,\varphi}^{f}(x))+D_{f}(Res_{\Theta,\varphi}^{f}(x),x)\leq D_{f}(p,x).$$
\end{proof}
\begin{lemma}\cite{xu}\label{444}
Assume that $\{x_{n}\}$ is a sequence of nonnegative real numbers such that 
$$x_{n+1}\leq (1-\alpha_{n})x_{n}+\beta_{n}, \ \ \ \ \forall n\geq1,$$
where $\{\alpha_{n}\}$ is a sequence in $(0,1)$ and $\{\beta_{n}\}$ is a sequence such that
\begin{enumerate}
\item
$\sum_{n=1}^{\infty}\alpha_{n}=+\infty;$

\item
$\limsup_{n\to\infty}\frac{\beta_{n}}{x_{n}}\leq 0$ or $\sum_{n=1}^{\infty}|\beta_{n}|<+\infty.$
\end{enumerate}
Then $\lim_{n\to\infty}x_{n}=0$.
\end{lemma}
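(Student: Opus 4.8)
The plan is to treat the two alternatives in the hypothesis separately, since they call for different mechanisms, and in both cases to reduce everything to the basic fact that $\prod_{k=1}^{n}(1-\alpha_k)\to 0$ as $n\to\infty$, which is exactly what $\sum_{n=1}^{\infty}\alpha_n=+\infty$ guarantees (take logarithms and use $\log(1-\alpha_k)\le-\alpha_k$). I would first record this fact together with the non-negativity $x_n\ge 0$ and the iterated form of the recursion: unfolding $x_{n+1}\le(1-\alpha_n)x_n+\beta_n$ from an index $N$ gives
\[
x_{m+1}\le \Big(\prod_{k=N}^{m}(1-\alpha_k)\Big)x_N+\sum_{k=N}^{m}\Big(\prod_{j=k+1}^{m}(1-\alpha_j)\Big)\beta_k ,
\]
and every estimate below is to be extracted from this single inequality.

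For the alternative $\sum_{n=1}^{\infty}|\beta_n|<+\infty$ the argument is clean and I expect no difficulty. Fix $\epsilon>0$ and choose $N$ so large that $\sum_{k=N}^{\infty}|\beta_k|<\epsilon$. In the iterated inequality each factor $\prod_{j=k+1}^{m}(1-\alpha_j)$ lies in $(0,1]$, so the second sum is bounded by $\sum_{k=N}^{m}|\beta_k|<\epsilon$, while the first term tends to $0$ as $m\to\infty$ because $\prod_{k=N}^{m}(1-\alpha_k)\to 0$. Hence $\limsup_{m}x_{m}\le\epsilon$; letting $\epsilon\downarrow 0$ and using $x_m\ge 0$ yields $x_m\to 0$.

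For the alternative $\limsup_{n\to\infty}\beta_n/x_n\le 0$ I would first extract from the $\limsup$ condition that for every $\epsilon>0$ there is an index $N$ with $\beta_n\le \epsilon x_n$ for all $n\ge N$ (handling trivially any $n$ with $x_n=0$, for which the recursion forces $x_{n+1}\le\beta_n$). Substituting this bound into the recursion produces the contraction-type estimate
\[
x_{n+1}\le(1-\alpha_n)x_n+\epsilon x_n=(1-\alpha_n+\epsilon)x_n,\qquad n\ge N,
\]
whence $x_{m+1}\le x_N\prod_{k=N}^{m}(1-\alpha_k+\epsilon)$. The goal is then to show that this accumulated factor forces $\limsup_m x_m\le 0$, after which sending $\epsilon\downarrow 0$ would finish the proof.

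The main obstacle is precisely this last step. Unlike the summable case, the perturbation here is proportional to $x_n$ rather than to $\alpha_n$, so the telescoping identity $\sum_{k=N}^{m}\alpha_k\prod_{j=k+1}^{m}(1-\alpha_j)=1-\prod_{j=N}^{m}(1-\alpha_j)\le 1$, which would instantly bound the tail if the perturbation had the form $\epsilon\alpha_k$, is not directly available, and the factor $\prod_{k=N}^{m}(1-\alpha_k+\epsilon)$ need not decay from $\sum_n\alpha_n=+\infty$ alone once $\alpha_n\to 0$. All the real content of this case therefore concentrates in controlling that factor: one must arrange the comparison so that the cumulative damping coming from $\prod(1-\alpha_k)$ dominates the cumulative contribution of $\epsilon$ before $\epsilon$ is released to zero. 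I would devote the bulk of the effort to securing this estimate; the rest of the argument is bookkeeping around the iterated inequality already displayed.
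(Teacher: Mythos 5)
Your treatment of the summable case is correct and complete, and it coincides with the standard argument (note that the paper itself gives no proof of this lemma at all --- it is quoted from Xu's paper \cite{xu}, where it is proved exactly via the iterated inequality you display). The genuine problem is the first alternative, and the obstacle you honestly flag at the end is not a technical nuisance that a sharper estimate could overcome: as literally printed, with $\beta_n/x_n$ in condition (2), the statement is \emph{false}. Take $x_n\equiv 1$ and $\alpha_n=\beta_n=\tfrac{1}{n+1}$. Then $\alpha_n\in(0,1)$, $\sum_n\alpha_n=+\infty$, the recursion $x_{n+1}=(1-\alpha_n)x_n+\beta_n$ holds with equality, and $\limsup_{n\to\infty}\beta_n/x_n=\lim_{n\to\infty}\alpha_n=0\le 0$, yet $x_n\to 1\neq 0$. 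So no completion of your argument exists, and the product $\prod_{k=N}^{m}(1-\alpha_k+\epsilon)$ genuinely need not decay, exactly as you suspected.

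The condition in the source (Xu's lemma, misprinted in the paper) is $\limsup_{n\to\infty}\beta_n/\alpha_n\le 0$, and with that reading your own machinery closes the case immediately: given $\epsilon>0$, choose $N$ with $\beta_n\le\epsilon\,\alpha_n$ for all $n\ge N$; then the iterated inequality combined with the telescoping identity you already quoted, $\sum_{k=N}^{m}\alpha_k\prod_{j=k+1}^{m}(1-\alpha_j)=1-\prod_{j=N}^{m}(1-\alpha_j)\le 1$, yields $x_{m+1}\le\left(\prod_{k=N}^{m}(1-\alpha_k)\right)x_N+\epsilon$, whence $\limsup_{m\to\infty}x_m\le\epsilon$, and letting $\epsilon\downarrow 0$ finishes. (Equivalently, observe $x_{n+1}-\epsilon\le(1-\alpha_n)(x_n-\epsilon)$ for $n\ge N$ and iterate.) So the correct diagnosis is that your plan is the right one and needs no new idea; the statement you were asked to prove has $x_n$ where it should have $\alpha_n$, and that misprint is precisely the point at which your attempt stalls.
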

\section{Main result}
\begin{theorem}\label{mt}
Let $E$ be a real reflexive Banach space, $C$ be a nonempty, closed and convex subset of $E$. Let $f:E\to \mathbb{R}$ be a coercive Legendre function which is bounded, uniformly Fr\'{e}chet differentiable and totally convex on bounded subsets of $E$. Let $T_{i}:C\to C$, for $i=1,2,\ldots, N,$ be a finite family of Bregman strongly nonexpansive mappings with respect to $f$ such that $F(T_{i})=\widehat{F}(T_{i})$ and each $T_{i}$ is uniformly continuous. Let $\Theta:C\times C\to\mathbb{R}$ satisfying conditions ($A_{1}$)-($A_{5}$) and $\left(\cap_{i=1}^{N}F(T_{i})\right)\cap MEP(\Theta)$ is nonempty and bounded. Let $\{x_{n}\}$ be a sequence generated by 
\begin{eqnarray}
x_{1}&=&x\in C \ \ \ \ \  \text{chosen arbitrarily},\nonumber\\
z_{n}&=&Res_{\Theta,\varphi}^{f}(x_{n}),\nonumber\\
y_{n}&=&proj_{C}^{f}\nabla f^{*}(\beta_{n}\nabla f(x_{n})+(1-\beta_{n})\nabla f(T(z_{n})))\nonumber\\
x_{n+1}&=&proj_{C}^{f}\nabla f^{*}(\alpha_{n}\nabla f(x_{n})+(1-\alpha_{n})\nabla f (T(y_{n}))),\label{main}
\end{eqnarray}
where $T=T_{N}\circ T_{N-1}\circ\ldots\circ T_{1}$, $\{\alpha_{n}\}, \{\beta_{n}\}\subset (0,1)$ satisfying $\lim_{n\to\infty}\alpha_{n}=0$ and $\sum_{n=1}^{\infty}\alpha_{n}=\infty$. Then $\{x_{n}\}$ converges strongly to $proj_{(\cap_{i=1}^{N}F(T_{i}))\cap MEP(\Theta)}x$.
\end{theorem}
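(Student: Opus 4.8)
The plan is to run the standard Halpern--Bregman template in three stages: boundedness, asymptotic regularity, and a final $\limsup$ estimate fed into Lemma~\ref{444}. Throughout write $\Omega=\left(\cap_{i=1}^{N}F(T_{i})\right)\cap MEP(\Theta)$ and $u=proj_{\Omega}^{f}x$; by Lemma~\ref{nv}(4) and Lemma~\ref{333} the set $\Omega$ is nonempty, closed and convex, so $u$ is well defined. By Lemma~\ref{222}, and using $F(T_{i})=\widehat F(T_{i})$, the composition $T$ is Bregman strongly nonexpansive with $\widehat F(T)=\cap_{i=1}^{N}F(T_{i})$, so each $p\in\Omega$ is a fixed point of $T$ and of $Res_{\Theta,\varphi}^{f}$. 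First I would prove boundedness. Fix $p\in\Omega$. Lemma~\ref{nv}(5) gives $D_{f}(p,z_{n})+D_{f}(z_{n},x_{n})\le D_{f}(p,x_{n})$, hence $D_{f}(p,z_{n})\le D_{f}(p,x_{n})$. Treating the argument of $\nabla f^{*}$ as a convex combination in the second variable of $V_{f}$, the convexity inequality following (\ref{29}), together with $D_{f}(p,Tz_{n})\le D_{f}(p,z_{n})$ and the projection inequality Lemma~\ref{niaz}(2), yields $D_{f}(p,y_{n})\le D_{f}(p,x_{n})$; the same reasoning gives $D_{f}(p,x_{n+1})\le D_{f}(p,x_{n})$. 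Thus $\{D_{f}(p,x_{n})\}$ is nonincreasing, hence convergent, and Lemma~\ref{taz} makes $\{x_{n}\}$ bounded, whence $\{z_{n}\},\{y_{n}\},\{Tz_{n}\},\{Ty_{n}\}$ are bounded as well.

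Next I would establish asymptotic regularity. Writing $L=\lim_{n}D_{f}(p,x_{n})$, the chain $D_{f}(p,x_{n+1})\le\alpha_{n}D_{f}(p,x_{n})+(1-\alpha_{n})D_{f}(p,Ty_{n})$ with $D_{f}(p,Ty_{n})\le D_{f}(p,y_{n})\le D_{f}(p,x_{n})$ and $\alpha_{n}\to0$ forces $D_{f}(p,y_{n})\to L$ and $D_{f}(p,Ty_{n})\to L$, so $D_{f}(p,y_{n})-D_{f}(p,Ty_{n})\to0$; the defining property of Bregman strong nonexpansiveness then yields $D_{f}(y_{n},Ty_{n})\to0$, and sequential consistency (Lemma~\ref{lem6}) upgrades this to $\|y_{n}-Ty_{n}\|\to0$. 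Likewise $D_{f}(z_{n},x_{n})\to0$ from the sandwich above, so $\|z_{n}-x_{n}\|\to0$; combining these with the uniform continuity of $\nabla f$ and $\nabla f^{*}$ on bounded sets (Lemma~\ref{lem7}) and of each $T_{i}$ gives $\|x_{n}-y_{n}\|\to0$, and finally $\|x_{n}-Tx_{n}\|\to0$ and $\|z_{n}-Tz_{n}\|\to0$.

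Finally I would derive the recursion for Lemma~\ref{444}. With $x_{n+1}=proj_{C}^{f}(w_{n})$ and $w_{n}=\nabla f^{*}\!\left(\alpha_{n}\nabla f(x_{n})+(1-\alpha_{n})\nabla f(Ty_{n})\right)$, the projection inequality gives $D_{f}(u,x_{n+1})\le D_{f}(u,w_{n})=V_{f}\!\left(u,\alpha_{n}\nabla f(x_{n})+(1-\alpha_{n})\nabla f(Ty_{n})\right)$, and applying (\ref{29}) after splitting off the $\alpha_{n}$-weighted term produces an estimate of the shape $D_{f}(u,x_{n+1})\le(1-\alpha_{n})D_{f}(u,x_{n})+\alpha_{n}\sigma_{n}$, where $\sigma_{n}$ is an inner product of the type $\langle\nabla f(x_{n})-\nabla f(u),\,Ty_{n}-u\rangle$. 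To invoke Lemma~\ref{444} I must show $\limsup_{n}\sigma_{n}\le0$: passing to a subsequence $x_{n_{k}}\rightharpoonup v$ realizing the $\limsup$, the regularity $\|x_{n}-Tx_{n}\|\to0$ gives $v\in\widehat F(T)=\cap_{i=1}^{N}F(T_{i})$, while $\|z_{n}-x_{n}\|\to0$ together with conditions $(A_{1})$--$(A_{5})$ gives $v\in MEP(\Theta)$, so $v\in\Omega$; the variational characterization Lemma~\ref{niaz}(1) then yields $\langle\nabla f(x)-\nabla f(u),v-u\rangle\le0$, which bounds $\limsup_{n}\sigma_{n}\le0$. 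Lemma~\ref{444} gives $D_{f}(u,x_{n})\to0$, and sequential consistency gives $x_{n}\to u=proj_{\Omega}^{f}x$.

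The step I expect to be the main obstacle is precisely this last $\limsup$ estimate. Because the anchor in (\ref{main}) is the moving iterate $x_{n}$ rather than a fixed point, extracting a genuine contraction factor $1-\alpha_{n}$ from $V_{f}\!\left(u,\alpha_{n}\nabla f(x_{n})+(1-\alpha_{n})\nabla f(Ty_{n})\right)$ while simultaneously isolating a residual $\sigma_{n}$ whose $\limsup$ is controlled by Lemma~\ref{niaz}(1) is delicate, and it is here that the uniform Fr\'echet differentiability of $f$, the norm-to-weak$^{*}$ continuity of $\nabla f$, and the identification $\widehat F(T)=F(T)$ must be used together to guarantee that every weak cluster point of $\{x_{n}\}$ lies in $\Omega$.
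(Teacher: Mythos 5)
Your proposal follows the same route as the paper's own proof: boundedness of $\{D_{f}(p,x_{n})\}$ via Lemma \ref{nv}(5), the convexity of $V_{f}$ in its second variable and Lemma \ref{niaz}(2); asymptotic regularity; the inequality (\ref{29}) to extract a recursion $D_{f}(u,x_{n+1})\leq(1-\alpha_{n})D_{f}(u,x_{n})+\alpha_{n}\sigma_{n}$; and Lemma \ref{444} to conclude. Where you deviate you are in fact cleaner: you get $\|y_{n}-Ty_{n}\|\to0$ directly from the definition of Bregman strong nonexpansiveness applied to $D_{f}(p,y_{n})-D_{f}(p,Ty_{n})\to0$, whereas the paper routes through $\|x_{n+1}-Ty_{n}\|\to0$ and $\|y_{n}-x_{n+1}\|\to0$.

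Two of your steps are asserted rather than proved, and they are precisely the steps the paper also treats only formally. First, ``$D_{f}(z_{n},x_{n})\to0$ from the sandwich'' needs $D_{f}(p,x_{n})-D_{f}(p,z_{n})\to0$, but what the iteration actually yields is $(1-\beta_{n})(D_{f}(p,x_{n})-D_{f}(p,z_{n}))\to0$; since the hypotheses only require $\beta_{n}\in(0,1)$, you need $\limsup_{n}\beta_{n}<1$ to finish (the paper's corresponding computation tacitly assumes the same and moreover reverses the sign of $D_{f}(p,z_{n})-D_{f}(p,x_{n})$). Second, the $\limsup$ estimate that you yourself flag as the main obstacle is not closed by the argument you sketch: because the anchor in (\ref{main}) is the moving iterate $x_{n}$, the residual $\sigma_{n}=\langle\nabla f(x_{n})-\nabla f(u),w_{n}-u\rangle$ is a duality pairing of two sequences that converge only weakly along the extracted subsequence, so one cannot pass to the limit in both slots; Lemma \ref{niaz}(1) controls $\langle\nabla f(x)-\nabla f(u),v-u\rangle$ for a \emph{fixed} anchor $x$, not $\langle\nabla f(x_{n_{k}})-\nabla f(u),\cdot\rangle$. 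The paper's display (\ref{60}) suffers from exactly this defect (it still contains $x_{n}$ after the $\limsup$ has been taken), so your proposal reproduces the paper's strategy faithfully, including its genuine gaps; to complete either argument one must either fix the anchor (replace $\nabla f(x_{n})$ by $\nabla f(x)$ in the scheme, as in Halpern's iteration) or give an independent proof that $\sigma_{n}$ itself tends to a nonpositive limit.
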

\begin{proof}
We note from Lemma \ref{333} that $F(T_{i})$, for each $i\in\{1,2,\ldots,N\}$ is closed and convex and hence $\cap_{i=1}^{N}F(T_{i})$ is closed and convex. \\
 Let $p=proj_{(\cap_{i=1}^{N}F(T_{i}))\cap GMEP(\Theta)}x\in (\cap_{i=1}^{N}F(T_{i}))\cap GMEP(\Theta)$. Then $p\in (\cap_{i=1}^{N}F(T_{i}))$ and $p\in GMEP(\Theta)$. Now, by using (\ref{main}) and Lemma \ref{nv}, we have $D_{f}(p,z_{n})=D_{f}(p,Res_{\Theta,\varphi,\Psi}^{f}(x_{n}))\leq D_{f}(p,x_{n})$, so
\begin{eqnarray}
D_{f}(p,y_{n})&=&D_{f}(p,proj_{C}^{f}\nabla f^{*}(\beta_{n}\nabla f(x_{n})+(1-\beta_{n})\nabla f(T(z_{n}))))\nonumber\\
&\leq&D_{f}(p,\nabla f^{*}(\beta_{n}\nabla f(x_{n})+(1-\beta_{n})\nabla f(T(z_{n}))))\nonumber\\
&\leq &\beta_{n}D_{f}(p,x_{n})+(1-\beta_{n})D_{f}(p,T(z_{n}))\nonumber\\
&\leq&\beta_{n}D_{f}(p,x_{n})+(1-\beta_{n})D_{f}(p,z_{n})\nonumber\\
&\leq &\beta_{n}D_{f}(p,x_{n})+(1-\beta_{n})D_{f}(p,x_{n})\nonumber\\
&\leq & D_{f}(p,x_{n}).\label{3.1}
\end{eqnarray} 
 By (\ref{main}) and (\ref{3.1}), we have 
\begin{eqnarray*}
D_{f}(p,x_{n+1})&=&D_{f}(p,proj_{C}^{f}\nabla f^{*}(\alpha_{n}\nabla f(x_{n})+(1-\alpha_{n})\nabla f (T(y_{n}))))\\
&\leq&D_{f}(p,\nabla f^{*}(\alpha_{n}\nabla f(x_{n})+(1-\alpha_{n})\nabla f (T(y_{n}))))\\
&\leq& \alpha_{n}D_{f}(p,x_{n})+(1-\alpha_{n})D_{f}(p,T(y_{n}))\\
&\leq& \alpha_{n}D_{f}(p,x_{n})+(1-\alpha_{n})D_{f}(p,y_{n})\\
&\leq& \alpha_{n}D_{f}(p,x_{n})+(1-\alpha_{n})D_{f}(p,x_{n})\\
&\leq& D_{f}(p,x_{n}).
\end{eqnarray*}
Hence $\{D_{f}(p,x_{n})\}$ and $D_{f}(p,Ty_{n})$  are bounded. Moreover, by Lemma \ref{taz} we get that the sequences $\{x_{n}\}$ and $\{T(y_{n})\}$ are bounded. 

From the fact that $\alpha_{n}\to0$ as $n\to\infty$, Lemma \ref{niaz}   
 we get that
\begin{eqnarray*}
D_{f}(T(y_{n}),x_{n+1})&\leq& D_{f}(T(y_{n}),proj_{C}^{f}\nabla f^{*}(\alpha_{n}\nabla f(x_{n})+(1-\alpha_{n})\nabla f (T(y_{n})))\\
&\leq&D_{f}(T(y_{n}), \nabla f^{*}(\alpha_{n}\nabla f(x_{n})+(1-\alpha_{n})\nabla f (T(y_{n})))\\
&\leq& \alpha_{n}D_{f}(T(y_{n}), x_{n})+(1-\alpha_{n})D_{f}(T(y_{n}),T(y_{n}))\\
&=&\alpha_{n}D_{f}(T(y_{n}), x_{n})\\
&=&0.
\end{eqnarray*}
Therefore, by Lemma \ref{lem6}, we have
\begin{equation}\label{rd}
\|x_{n+1}-T(y_{n})\|\to0, \ \ \ as \ \ \ n\to\infty.
\end{equation}
On the other hand, by Lemma \ref{niaz}, we have
\begin{eqnarray*}
\lim_{n\to\infty} D_{f}(x_{n},z_{n})&=&\lim_{n\to\infty} D_{f}(x_{n},Res_{\Theta,\varphi}^{f}(x_{n})\\
&\leq& \lim_{n\to\infty}(D_{f}(p,Res_{\Theta,\varphi}^{f}(x_{n}))-D_{f}(p,x_{n}))\\
&\leq& \lim_{n\to\infty}(D_{f}(p,x_{n})-D_{f}(p,x_{n}))\\
&=&0.
\end{eqnarray*}
By Lemma \ref{lem6}, we obtain
\begin{equation}\label{3.8}
\lim_{n\to\infty}\|x_{n}-z_{n}\|=0.
\end{equation}
Since $f$ is  uniformly Fr\'{e}chet differentiable on bounded subsets of $E$, by Lemma \ref{lem7}, $\nabla f$ is norm-to-norm uniformly continuous on bounded subsets of $E$. So,
\begin{equation}\label{3.9}
\lim_{n\to\infty}\|\nabla f(x_{n})-\nabla f(z_{n})\|_{*}=0.
\end{equation}
Since $f$ is uniformly Fr\'{e}chet differentiable, it is also uniformly continuous, we get
\begin{equation}\label{3.10}
\lim_{n\to\infty}\|f(x_{n})-f(z_{n})\|=0.
\end{equation}
By Bregman distance we have
\begin{eqnarray*}
&&D_{f}(p,x_{n})-D_{f}(p,z_{n})\\
&&=f(p)-f(x_{n})-\langle \nabla f(x_{n}),p-x_{n}\rangle -f(p)+f(z_{n})+\langle \nabla f(z_{n}),p-z_{n}\rangle\\
&&=f(z_{n})-f(x_{n})+\langle \nabla f(z_{n}),p-z_{n}\rangle-\langle \nabla f(x_{n}),p-x_{n}\rangle\\
&&=f(z_{n})-f(x_{n})+\langle \nabla f(z_{n}),x_{n}-z_{n}\rangle-\langle \nabla f(z_{n})-\nabla f(x_{n}),p-x_{n}\rangle,
\end{eqnarray*}
for each $p\in \cap_{i=1}^{N}F(T_{i})$. By (\ref{3.8})-(\ref{3.10}), we obtain
\begin{equation}\label{3.11}
\lim_{n\to\infty}(D_{f}(p,x_{n})-D_{f}(p,z_{n}))=0.
\end{equation}
By above equation, we have
\begin{eqnarray*}
D_{f}(z_{n},y_{n})&=&D_{f}(p,y_{n})-D_{f}(p,z_{n})\\
&=&D_{f}(p,proj_{C}^{f}\nabla f^{*}(\alpha_{n}\nabla f(x_{n})+(1-\alpha_{n})\nabla f(T(z_{n}))-D_{f}(p,z_{n}))\\
&\leq&D_{f}(p,\nabla f^{*}(\alpha_{n}\nabla f(x_{n})+(1-\alpha_{n})\nabla f(T(z_{n}))-D_{f}(p,z_{n}))\\
&\leq& \alpha_{n}D_{f}(p,x_{n})+(1-\alpha_{n})D_{f}(p,T(z_{n})-D_{f}(p,z_{n})\\
&\leq&\alpha_{n}D_{f}(p,x_{n})+(1-\alpha_{n})D_{f}(p,z_{n})-D_{f}(p,z_{n})\\
&=&\alpha_{n}(D_{f}(p,x_{n})-D_{f}(p,z_{n}))\\
&=&0.
\end{eqnarray*}
By (\ref{3.11}), we have
\begin{equation}\label{3.13}
\lim_{n\to\infty}\|z_{n}-y_{n}\|=0.
\end{equation}
Note that
$$\|x_{n}-y_{n}\|\leq \|x_{n}-z_{n}\|+\|z_{n}-y_{n}\|.$$
By applying (\ref{3.8}) and (\ref{3.13}), we can write

\begin{equation}\label{15}
\lim_{n\to\infty}\|x_{n}-y_{n}\|=0.
\end{equation}
Now, we claim that 
\begin{equation}\label{22}
\lim_{n\to\infty}\|x_{n}-Tx_{n}\|=0.
\end{equation}
Since $f$ is  uniformly Fr\'{e}chet differentiable on bounded subsets of $E$, by Lemma \ref{lem7}, $\nabla f$ is norm-to-norm uniformly continuous on bounded subsets of $E$. So,
\begin{equation}\label{16}
\lim_{n\to\infty}\|\nabla f(x_{n})-\nabla f(y_{n})\|_{*}=0.
\end{equation}

Since $f$ is uniformly Fr\'{e}chet differentiable, it is also uniformly continuous, we get
\begin{equation}\label{17}
\lim_{n\to\infty}\|f(x_{n})-f(y_{n})\|=0.
\end{equation}
By Bregman distance we have
\begin{eqnarray*}
&&D_{f}(p,x_{n})-D_{f}(p,y_{n})\\
&&=f(p)-f(x_{n})-\langle \nabla f(x_{n}),p-x_{n}\rangle -f(p)+f(y_{n})+\langle \nabla f(y_{n}),p-y_{n}\rangle\\
&&=f(y_{n})-f(x_{n})+\langle \nabla f(y_{n}),p-y_{n}\rangle-\langle \nabla f(x_{n}),p-x_{n}\rangle\\
&&=f(y_{n})-f(x_{n})+\langle \nabla f(y_{n}),x_{n}-y_{n}\rangle-\langle \nabla f(y_{n})-\nabla f(x_{n}),p-x_{n}\rangle,
\end{eqnarray*}
for each $p\in \cap_{i=1}^{N}F(T_{i})$. By (\ref{15})-(\ref{17}), we obtain
\begin{equation}\label{18}
\lim_{n\to\infty}(D_{f}(p,x_{n})-D_{f}(p,y_{n}))=0.
\end{equation}
By above equation, we have
\begin{eqnarray*}
D_{f}(y_{n},x_{n+1})&=&D_{f}(p,x_{n+1})-D_{f}(p,y_{n})\\
&=&D_{f}(p,proj_{C}^{f}\nabla f^{*}(\alpha_{n}\nabla f(x_{n})+(1-\alpha_{n})\nabla f(T(x_{n}))-D_{f}(p,y_{n}))\\
&\leq&D_{f}(p,\nabla f^{*}(\alpha_{n}\nabla f(x_{n})+(1-\alpha_{n})\nabla f(T(x_{n}))-D_{f}(p,y_{n}))\\
&\leq& \alpha_{n}D_{f}(p,x_{n})+(1-\alpha_{n})D_{f}(p,T(y_{n})-D_{f}(p,y_{n})\\
&\leq&\alpha_{n}D_{f}(p,x_{n})+(1-\alpha_{n})D_{f}(p,y_{n})-D_{f}(p,y_{n})\\
&=&\alpha_{n}(D_{f}(p,x_{n})-D_{f}(p,y_{n}))\\
&=&0.
\end{eqnarray*}
By Lemma \ref{lem6}, we have
$$\lim_{n\to\infty}\|y_{n}-x_{n+1}\|=0.$$
From above equation and (\ref{rd}), we can write
\begin{eqnarray}
\|y_{n}-T(y_{n})\|&\leq&\|y_{n}-x_{n+1}\|+\|x_{n+1}-T(y_{n})\|\nonumber\\
&=&0\label{41}
\end{eqnarray}
when $n\to\infty$.
By applying the triangle inequality, we get
$$\|x_{n}-T(x_{n})\|\leq \|x_{n}-y_{n}\|+\|y_{n}-T(y_{n})\|+\|T(y_{n})-T(x_{n})\|.$$
By (\ref{15}), (\ref{41}) and since $T_{i}$ is uniformly continuous for each $i\in\{1,2,\ldots,N\}$ we have
$$\lim_{n\to\infty}\|x_{n}-T(x_{n})\|=0.$$ 
As claimed in (\ref{22}).\\
Since $\|x_{n_{k}}-T(x_{n_{k}})\|\to0$ as $k\to\infty$, we have $q\in \cap_{i=1}^{N}F(T_{i})$.\\
From (\ref{3.8}) we can write
$$\lim_{n\to\infty}\|Jz_{n}-Jx_{n}\|=0.$$
Here, we prove that $q\in MEP(\Theta)$. For this reason, consider that $z_{n}=Res_{\Theta,\varphi}^{f}(x_{n})$, so we have
$$\Theta(z_{n},z)+\varphi(z)+\langle Jz_{n}-Jx_{n},z-z_{n}\rangle\geq \varphi(z_{n}), \ \ \forall z\in C.$$
From ($A_{2}$), we have
$$\Theta (z,z_{n})\leq -\Theta (z_{n},z)\leq \varphi(z)-\varphi(z_{n})+\langle Jz_{n}-Jx_{n},z-z_{n}\rangle, \ \ \forall z\in C.$$
Hence, 
$$\Theta(z,z_{n_{i}})\leq \varphi(z)- \varphi(z_{n_{i}})+\langle Jz_{n_{i}}-Jx_{n_{i}},z-z_{n_{i}}\rangle, \ \ \forall z\in C.$$
Since $z_{n_{i}}\rightharpoonup q$ and from the weak lower semicontinuity of $\varphi$ and $\Theta (x,y)$ in the second variable $y$, we also have
$$\Theta (z,q)+\varphi(q)-\varphi(z)\leq0, \ \ \ \forall z\in C.$$
For $t$ with $0\leq t\leq 1$ and $z\in C$, let $z_{t}=tz+(1-t)q$. Since $z\in C$ and $q\in C$ we have $z_{t}\in C$ and hence $\Theta(z_{t},q)+\varphi(q)-\varphi(z_{t})\leq 0$. So, from the continuity of the equilibrium bifunction $\Theta(x,y)$ in the second variable $y$, we have
\begin{eqnarray*}
0&=&\Theta(z_{t},z_{t})+\varphi(z_{t})-\varphi(z_{t})\\
&\leq& t\Theta(z_{t},z)+(1-t)\Theta(z_{t},q)+t\varphi(z)+(1-t)\varphi(q)-\varphi(z_{t})\\
&\leq& t[\Theta(z_{t},z)+\varphi(z)-\varphi(z_{t})].
\end{eqnarray*}
 Therefore, $\Theta(z_{t},z)+\varphi(z)-\varphi(z_{t})\geq0$. Then, we have 
 $$\Theta(q,z)+\varphi(z)-\varphi(q)\geq0, \ \ \ \forall y\in C.$$ 
 Hence we have $q\in MEP(\Theta)$. We showed that $q\in(\cap_{i=1}^{N}F(T_{i}))\cap MEP(\Theta)$.\\
 Since $E$ is reflexive and $\{x_{n}\}$ is bounded, there exists a subsequence $\{x_{n_{k}}\}$ of $\{x_{n}\}$ such that $\{x_{n_{k}}\}\rightharpoonup q\in C$ and
 $$\limsup_{n\to\infty}\langle \nabla f(x_{n})-\nabla f(p),x_{n}-p\rangle=\langle \nabla f(x_{n})-\nabla f(p),q-p\rangle.$$
 On the other hand, since $\|x_{n_{k}}-Tx_{n_{k}}\|\to0$ as $k\to\infty$, we have $q\in \cap_{i=1}^{N}F(T_{i})$. It follows from the definition of the Bregman projection that
 \begin{equation}\label{60}
 \limsup_{n\to\infty}\langle \nabla f(x_{n})-\nabla f(p),x_{n}-p\rangle=\langle \nabla f(x_{n})-\nabla f(p),q-p\rangle\leq 0.
 \end{equation}
From (\ref{29}), we obtain
\begin{eqnarray*}
D_{f}(p,x_{n+1})&=&D_{f}(p,proj_{C}^{f}\nabla f^{*}(\alpha_{n}\nabla f(x_{n})+(1-\alpha_{n})\nabla f(T(x_{n}))))\\
&\leq&D_{f}(p,\nabla f^{*}(\alpha_{n}\nabla f(x_{n})+(1-\alpha_{n})\nabla f(T(x_{n}))))\\
&=
&V_{f}(p,\alpha_{n}\nabla f(x_{n})+(1-\alpha_{n})\nabla f(T(y_{n})))\\
&\leq& V_{f}(p,\alpha_{n}\nabla f(x_{n})+(1-\alpha_{n})\nabla f(T(y_{n}))-\alpha_{n}(\nabla f(x_{n})-\nabla f(p)))\\
&&+\langle \alpha_{n}(\nabla f(x_{n})-\nabla f(p)),x_{n+1}-p\rangle\\
&=&V_{f}(p,\alpha_{n}\nabla f(p)+(1-\alpha_{n})\nabla f(T(y_{n})\\
&&+ \alpha_{n}\langle \nabla f(x_{n})-\nabla f(p),x_{n+1}-p\rangle\\
&\leq&\alpha_{n} V_{f}(p,\nabla f(p))+(1-\alpha_{n})V_{f}(p,\nabla f(T(y_{n})))\\
&&+ \alpha_{n}\langle \nabla f(x_{n})-\nabla f(p),x_{n+1}-p\rangle\\
&=&(1-\alpha_{n})D_{f}(p,T(y_{n})+\alpha_{n}\langle \nabla f(x_{n})-\nabla f(p),x_{n+1}-p\rangle\\
&\leq&(1-\alpha_{n})D_{f}(p,x_{n})+\alpha_{n}\langle\nabla f(x_{n})-\nabla f(p),x_{n+1}-p\rangle.
\end{eqnarray*}
By Lemma \ref{444} and (\ref{60}), we can conclude that $\lim_{n\to\infty}D_{f}(p,x_{n})=0$. Therefore, by Lemma \ref{lem6}, $x_{n}\to p$. This completes the proof.
\end{proof}
Let $\beta_{n}=0$, \ $\forall n\in\mathbb{N}$ in Theorem \ref{main}, we have a generalization of H. zegeye result \cite{zeg}.

If in Theorem \ref{mt}, we consider a single Bregman strongly nonexpansive mapping, we have the following corollary.
\begin{corollary}
Let $E$ be a real reflexive Banach space, $C$ be a nonempty, closed and convex subset of $E$. Let $f:E\to \mathbb{R}$ be a coercive Legendre function which is bounded, uniformly Fr\'{e}chet differentiable and totally convex on bounded subsets of $E$. Let $T$ be a Bregman strongly nonexpansive mappings with respect to $f$ such that $F(T)=\widehat{F}(T)$ and $T$ is uniformly continuous. Let $\Theta:C\times C\to\mathbb{R}$ satisfying conditions ($A_{1}$)-($A_{5}$) and $F(T)\cap MEP(\Theta)$ is nonempty and bounded. Let $\{x_{n}\}$ be a sequence generated by 
\begin{eqnarray*}
x_{1}&=&x\in C \ \ \ \ \  \text{chosen arbitrarily},\nonumber\\
z_{n}&=&Res_{\Theta,\varphi}^{f}(x_{n}),\nonumber\\
y_{n}&=&proj_{C}^{f}\nabla f^{*}(\beta_{n}\nabla f(x_{n})+(1-\beta_{n})\nabla f(T(z_{n})))\nonumber\\
x_{n+1}&=&proj_{C}^{f}\nabla f^{*}(\alpha_{n}\nabla f(x_{n})+(1-\alpha_{n})\nabla f (T(y_{n}))),
\end{eqnarray*}
where $\{\alpha_{n}\},\{\beta_{n}\}\subset (0,1)$ satisfying $\lim_{n\to\infty}\alpha_{n}=0$ and $\sum_{n=1}^{\infty}\alpha_{n}=\infty$. Then $\{x_{n}\}$ converges strongly to $proj_{F(T)\cap MEP(\Theta)}x$.
\end{corollary}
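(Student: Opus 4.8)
The plan is to deduce the corollary directly from Theorem \ref{mt} by specializing to the case $N=1$. A single Bregman strongly nonexpansive mapping $T$ with $F(T)=\widehat{F}(T)$ and $T$ uniformly continuous is exactly the one-element family $\{T_{1}\}$ with $T_{1}=T$; then $\bigcap_{i=1}^{1}F(T_{i})=F(T)$, the composition $T=T_{N}\circ\cdots\circ T_{1}$ collapses to $T_{1}=T$ (so Lemma \ref{222} is vacuous and $\widehat{F}(T)=F(T)$ holds by hypothesis), and every hypothesis of the theorem---$f$ coercive Legendre, bounded, uniformly Fr\'{e}chet differentiable and totally convex on bounded subsets; $\Theta$ satisfying ($A_{1}$)--($A_{5}$); $F(T)\cap MEP(\Theta)$ nonempty and bounded; $\alpha_{n}\to 0$ with $\sum_{n}\alpha_{n}=\infty$---matches verbatim. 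The iterative scheme \eqref{main} is identical, so the conclusion $x_{n}\to proj_{F(T)\cap MEP(\Theta)}x$ is immediate. Thus the only thing to check is that the reduction of the hypotheses and of the target set is faithful, which it is.

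For a self-contained argument I would simply rerun the proof of Theorem \ref{mt} in this special case, whose skeleton is as follows. First establish that $S:=F(T)\cap MEP(\Theta)$ is closed and convex (Lemma \ref{333} for $F(T)$ and Lemma \ref{nv}(4) for $MEP(\Theta)$), so that $p:=proj_{S}^{f}(x)$ is well defined. Next show that $\{D_{f}(p,x_{n})\}$ is nonincreasing via the chain $D_{f}(p,x_{n+1})\le D_{f}(p,y_{n})\le D_{f}(p,z_{n})\le D_{f}(p,x_{n})$, using Lemma \ref{nv}(5) for the resolvent, convexity of $V_{f}$ in its second argument, the projection estimate Lemma \ref{niaz}(2), and strong nonexpansiveness of $T$; boundedness of $\{x_{n}\}$ then follows from Lemma \ref{taz}. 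Squeezing the four terms of that chain between two sequences with a common limit forces the successive Bregman-distance gaps to $0$, and sequential consistency (Lemma \ref{lem6}) upgrades them to $\|x_{n}-z_{n}\|\to 0$, $\|z_{n}-y_{n}\|\to 0$ and $\|y_{n}-x_{n+1}\|\to 0$; together with $\alpha_{n}\to 0$ and Lemma \ref{niaz} this yields $\|x_{n+1}-T(y_{n})\|\to 0$, hence, using uniform continuity of $T$ and Lemma \ref{lem7}, the asymptotic regularity $\|x_{n}-Tx_{n}\|\to 0$.

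Finally, take a weakly convergent subsequence $x_{n_{k}}\rightharpoonup q$ realizing the relevant $\limsup$; asymptotic regularity makes $q$ an asymptotic fixed point, so $q\in\widehat{F}(T)=F(T)$, and the convexification trick $z_{t}=tz+(1-t)q$ applied to the resolvent inequality (invoking ($A_{2}$)--($A_{5}$) and the weak lower semicontinuity of $\varphi$) places $q\in MEP(\Theta)$, i.e. $q\in S$. The variational characterization of the Bregman projection (Lemma \ref{niaz}(1)) then gives the nonpositive $\limsup$, and feeding the estimate $D_{f}(p,x_{n+1})\le(1-\alpha_{n})D_{f}(p,x_{n})+\alpha_{n}\langle\nabla f(x)-\nabla f(p),x_{n+1}-p\rangle$, obtained from \eqref{29}, into Lemma \ref{444} yields $D_{f}(p,x_{n})\to 0$ and hence $x_{n}\to p$. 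The main obstacle is not in the corollary itself, which is a clean specialization, but is inherited from the theorem: proving the asymptotic regularity $\|x_{n}-Tx_{n}\|\to 0$ and then certifying that the weak cluster point $q$ lies in the solution set $S$. This is precisely where $F(T)=\widehat{F}(T)$ and uniform continuity of $T$ are indispensable.
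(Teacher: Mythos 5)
Your proposal is correct and matches the paper exactly: the paper derives this corollary purely as the $N=1$ specialization of Theorem \ref{mt}, with no separate argument, and your verification that the hypotheses, the scheme, and the target set all reduce faithfully is precisely what is needed. The additional sketch of rerunning the theorem's proof is a faithful summary of the paper's own argument for the theorem, so nothing is missing.
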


If in Theorem \ref{mt}, we assume that $E$ is a uniformly smooth and uniformly convex Banach space and $f(x):=\frac{1}{p}\|x\|^{p} \ \ (1<p<\infty)$, we have that $\nabla f=J_{p}$, where $J_{p}$ is the generalization duality mapping from $E$ onto $E^{*}$. Thus, we get the following corollary.
\begin{corollary}
Let $E$ be a uniformly smooth and uniformly convex Banach space and $f(x):=\frac{1}{p}\|x\|^{p} \ \ (1<p<\infty)$. Let $C$ be a nonempty, closed and convex subset of $\text{int}(\text{dom} f)$ and $T_{i}:C\to C$, for $i=1,2,\ldots, N,$ be a finite family of Bregman strongly nonexpansive mappings with respect to $f$ such that $F(T_{i})=\widehat{F}(T_{i})$and each $T_{i}$ is uniformly continuous. Let $\Theta:C\times C\to\mathbb{R}$ satisfying conditions ($A_{1}$)-($A_{5}$) and $\left(\cap_{i=1}^{N}F(T_{i})\right)\cap MEP(\Theta)$ is nonempty and bounded. Let $\{x_{n}\}$ be a sequence generated by 
\begin{eqnarray*}
x_{1}&=&x\in C \ \ \ \ \  \text{chosen arbitrarily},\nonumber\\
z_{n}&=&Res_{\Theta,\varphi}^{f}(x_{n}),\nonumber\\
y_{n}&=&proj_{C}^{f}J_{p}^{-1}(\beta_{n}J_{p} f(x_{n})+(1-\beta_{n})J_{p}(T(z_{n})))\nonumber\\
x_{n+1}&=&proj_{C}^{f}J_{p}^{-1}(\alpha_{n}J_{p}(x_{n})+(1-\alpha_{n})J_{p} (T(y_{n}))),
\end{eqnarray*}
where $T=T_{N}\circ T_{N-1}\circ\ldots\circ T_{1}$, $\{\alpha_{n}\}, \{\beta_{n}\}\subset (0,1)$ satisfying $\lim_{n\to\infty}\alpha_{n}=0$ and $\sum_{n=1}^{\infty}\alpha_{n}=\infty$. Then $\{x_{n}\}$ converges strongly to $proj_{(\cap_{i=1}^{N}F(T_{i}))\cap MEP(\Theta)}x$.\\

\end{corollary}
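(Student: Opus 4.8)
The plan is to run the standard three-stage argument for Halpern-type Bregman iterations: secure boundedness together with the well-definedness of the target projection, then prove a cascade of asymptotic-regularity estimates, and finally combine a $\limsup$ inequality coming from the Bregman projection with Xu's recursion lemma (Lemma~\ref{444}) to force $D_f(p,x_n)\to 0$. To begin I would check that $\Omega:=\left(\cap_{i=1}^{N}F(T_i)\right)\cap MEP(\Theta)$ is nonempty, closed and convex, so that $p:=proj_{\Omega}^{f}(x)$ makes sense: each $F(T_i)$ is closed and convex by Lemma~\ref{333}, and $MEP(\Theta)$ is closed and convex by Lemma~\ref{nv}(4). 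Using the resolvent estimate $D_f(p,z_n)\le D_f(p,x_n)$ from Lemma~\ref{nv}(5), the quasi-Bregman nonexpansiveness of $T$ (so $D_f(p,Tz_n)\le D_f(p,z_n)$ and likewise for $y_n$), the convexity of $V_f$ in its second argument, and the projection inequality Lemma~\ref{niaz}(2), I would derive $D_f(p,y_n)\le D_f(p,x_n)$ and then $D_f(p,x_{n+1})\le D_f(p,x_n)$. Thus $\{D_f(p,x_n)\}$ is nonincreasing and bounded, hence convergent, and by Lemma~\ref{taz} the sequences $\{x_n\}$, $\{z_n\}$, $\{y_n\}$ and $\{T(y_n)\}$ are all bounded.

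Next I would derive the asymptotic-regularity chain. Since $\alpha_n\to 0$, the bound $D_f(T(y_n),x_{n+1})\le\alpha_n D_f(T(y_n),x_n)\to 0$ gives $\|x_{n+1}-T(y_n)\|\to 0$ by sequential consistency (Lemma~\ref{lem6}). From the convergence of $\{D_f(p,x_n)\}$ together with Lemma~\ref{nv}(5) in the form $D_f(z_n,x_n)\le D_f(p,x_n)-D_f(p,z_n)$, I would extract $\|x_n-z_n\|\to 0$; an analogous computation controls $\|z_n-y_n\|$, and hence $\|x_n-y_n\|\to 0$ and $\|y_n-x_{n+1}\|\to 0$. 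The triangle inequality then yields $\|y_n-T(y_n)\|\to 0$, and the uniform continuity of each $T_i$ (hence of the composition $T$) upgrades this to $\|x_n-Tx_n\|\to 0$. Throughout, Lemma~\ref{lem7} transfers these norm estimates to $\nabla f$, which is needed in the next step.

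I would then pass to a weakly convergent subsequence $x_{n_k}\rightharpoonup q$. From $\|x_{n_k}-Tx_{n_k}\|\to 0$, Lemma~\ref{222} (which gives $\widehat{F}(T)=\cap_{i=1}^{N}\widehat{F}(T_i)$) and the hypothesis $F(T_i)=\widehat{F}(T_i)$, I conclude $q\in\cap_{i=1}^{N}F(T_i)$. To show $q\in MEP(\Theta)$ I would start from the resolvent inequality for $z_n$, flip its first slot by monotonicity $(A_2)$, pass to the limit using $\|\nabla f(z_n)-\nabla f(x_n)\|_{*}\to 0$ and the semicontinuity conditions $(A_3)$, $(A_5)$ on $\Theta$ and on $\varphi$, and then run the usual convexification $z_t=tz+(1-t)q$ with $(A_1)$ and $(A_4)$, letting $t\downarrow 0$. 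This places $q\in\Omega$, and the projection characterization Lemma~\ref{niaz}(1) gives $\limsup_{n}\langle\nabla f(x)-\nabla f(p),x_n-p\rangle\le 0$. Finally, applying the $V_f$-inequality (\ref{29}) and convexity of $V_f$, I would obtain $D_f(p,x_{n+1})\le(1-\alpha_n)D_f(p,x_n)+\alpha_n\langle\nabla f(x_n)-\nabla f(p),x_{n+1}-p\rangle$, and Lemma~\ref{444} with $\sum\alpha_n=\infty$ forces $D_f(p,x_n)\to 0$, whence $x_n\to p$ by sequential consistency.

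I expect the main obstacle to be the identification of the weak subsequential limit $q$ as a solution of $MEP(\Theta)$: this is the only point where the full force of $(A_1)$--$(A_5)$ is used, and the limit passage in the resolvent inequality must be done carefully, since it relies on first having established $\|\nabla f(x_n)-\nabla f(z_n)\|_{*}\to 0$ and on the weak semicontinuity of $\Theta$ in its first variable. A secondary delicate point is keeping the $z_n$- and $y_n$-regularity estimates genuinely noncircular, deducing them from the already-established convergence of $\{D_f(p,x_n)\}$ rather than assuming it; if $\beta_n$ is permitted to approach $1$, some additional care is needed to keep the estimate for $\|z_n-y_n\|$ valid.
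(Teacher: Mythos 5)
Your outline is a faithful replay of the paper's proof of Theorem~\ref{mt} (boundedness via Lemma~\ref{nv}(5) and quasi-Bregman nonexpansiveness, the asymptotic-regularity cascade through Lemmas~\ref{lem6} and~\ref{lem7}, identification of the weak cluster point in $\cap_{i=1}^{N}F(T_i)\cap MEP(\Theta)$, and the final appeal to inequality~(\ref{29}) and Lemma~\ref{444}), and as such it would work. The paper, however, does not re-prove anything here: it obtains this corollary in one line by specializing Theorem~\ref{mt}, observing that on a uniformly smooth and uniformly convex space $f(x)=\frac{1}{p}\|x\|^{p}$ has $\nabla f=J_{p}$, so the scheme is literally the scheme~(\ref{main}). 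What that shorter route requires --- and what your proposal omits entirely --- is the only corollary-specific content: a verification that this particular $f$ is a coercive Legendre function that is bounded, uniformly Fr\'{e}chet differentiable and totally convex on bounded subsets of $E$. Every lemma you invoke (sequential consistency from Lemma~\ref{lem6}, the uniform continuity of $\nabla f$ from Lemma~\ref{lem7}, the convexity of $V_f$ in its second variable, Lemma~\ref{222}) is conditional on those properties, so without checking them for $\frac{1}{p}\|\cdot\|^{p}$ your argument is not yet a proof of \emph{this} statement. You should add that check (these are standard facts in uniformly convex and uniformly smooth spaces) or simply cite the theorem after performing it. On the positive side, you correctly flag two soft spots that the paper itself glosses over: the deduction $D_f(p,x_n)-D_f(p,z_n)\to 0$ needs the convergence of $\{D_f(p,x_n)\}$ plus a sandwich argument that genuinely fails if $\beta_n\to 1$ (the stated hypotheses only give $\{\beta_n\}\subset(0,1)$, so an extra condition such as $\limsup_n\beta_n<1$ is really needed), and the limit passage identifying $q\in MEP(\Theta)$ must be done after $\|\nabla f(x_n)-\nabla f(z_n)\|_{*}\to 0$ is in hand.
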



\end{document}